\newtheorem{Theorem}{Theorem}[section]
\newtheorem{Lemma}[Theorem]{Lemma}
\newtheorem{Corollary}[Theorem]{Corollary}
\newtheorem{Remark}[Theorem]{Remark}
\newtheorem{Question}[Theorem]{Question}
\newtheorem{TheoremAlph}{Theorem}
\def\qed{\ifhmode\textqed\fi
	\ifmmode\ifinner\quad\qedsymbol\else\dispqed\fi\fi}
\def\textqed{\unskip\nobreak\penalty50
	\hskip2em\hbox{}\nobreak\hfill\qedsymbol
	\parfillskip=0pt \finalhyphendemerits=0}
\def\dispqed{\rlap{\qquad\qedsymbol}}
\def\Ker{\textup{Ker}}
\def\m{\mathfrak{m}}
\def\FF{\mathbb{F}}
\def\Tor{\textup{Tor}}
\def\lcm{\textup{lcm}}
\def\supp{\textup{supp}}
\def\reg{\textup{reg}}
\def\lex{\textup{lex}}
\def\ini{\textup{in}}
\begin{document}
	
	\title{Stanley-Reisner ideals with linear powers}
	\author{Antonino Ficarra, Somayeh Moradi}
	
	\address{Antonino Ficarra, BCAM -- Basque Center for Applied Mathematics, Mazarredo 14, 48009 Bilbao, Basque Country -- Spain, Ikerbasque, Basque Foundation for Science, Plaza Euskadi 5, 48009 Bilbao, Basque Country -- Spain}
	\email{aficarra@bcamath.org,\,\,\,\,\,\,\,\,\,\,\,\,\,antficarra@unime.it}
	
	\address{Somayeh Moradi, Department of Mathematics, Faculty of Science, Ilam University, P.O.Box 69315-516, Ilam, Iran}
	\email{so.moradi@ilam.ac.ir}
	
	\subjclass[2020]{Primary 13D02, 13C05, 13A02; Secondary 05E40}
	\keywords{Linear resolution, linear powers, monomial ideals, regularity}
	
	\begin{abstract}
		Let $S = K[x_1, \dots, x_n]$ be the standard graded polynomial ring over a field $K$. In this paper, we address and completely solve two fundamental open questions in Commutative Algebra:\medskip 
		\begin{enumerate}
			\item[\textup{(i)}]
			For which degrees $d$, does there exist a uniform combinatorial characterization of all squarefree monomial ideals in $S$ having $d$-linear resolutions?\smallskip
			\item[(ii)] For which degrees $d$, does having a linear resolution coincide with having linear powers for all squarefree monomial ideals of $S$ generated in degree $d$?
		\end{enumerate}\medskip
		Let $\mathcal{I}_{n,d}(K)$ denote the class of squarefree monomial ideals of $S$ having a $d$-linear resolution. Our main result establishes the equivalence of the following conditions:\medskip
		\begin{enumerate}
			\item[\textup{(a)}] Any squarefree monomial ideal $I$ in $S$ generated in degree $d$ has a linear resolution, if and only if, $I$ has linear powers.\smallskip
			\item[\textup{(b)}] $\mathcal{I}_{n,d}(K)$ is independent of the base field $K$.\smallskip
			\item[\textup{(c)}] $d\in\{0,1,2,n{-}2,n{-}1,n\}$.
		\end{enumerate}\medskip
		In each of these degrees, we show that a squarefree monomial ideal has a linear resolution if and only if all of its powers admit linear quotients, and we combinatorially classify such ideals. In contrast, for each degree $3\le d\le n{-}3$, we construct fully-supported squarefree monomial ideals $I$ and $J$ in $S$ generated in degree $d$ such that the linear resolution property of $I$ depends on the choice of the base field, $J$ has a linear resolution and $J^2$ does not have a linear resolution. 
	\end{abstract}
	
	\maketitle
	\vspace*{-1.8em}
	\section{Introduction}
	
	Let $S = K[x_1, \dots, x_n]$ be the standard graded polynomial ring over a field $K$, and let $I \subset S$ be a squarefree monomial ideal. The foundational works of Hochster, Stanley, and Reisner revealed a deep correspondence between such ideals and the combinatorics of simplicial complexes: $I$ is equal to the Stanley-Reisner ideal $I_\Delta$ of a unique simplicial complex $\Delta$ on the vertex set $[n]=\{1,\dots,n\}$. This algebra-topology correspondence has proved remarkably fruitful, enabling Stanley to resolve the Upper Bound Conjecture for simplicial spheres (cf. \cite[Section 5.4]{BH}).
	
	A central invariant in the homological study of graded ideals is the Castelnuovo-Mumford regularity, originally introduced by Eisenbud-Goto and, independently, by Ooishi. For a graded ideal $I \subset S$, it is defined as
	$$
	\reg\,I\ =\ \max\{j+i:\ \textup{H}_\m^i(I)_j\ne0\},
	$$
	where $\textup{H}_\m^i(I)_j$ is the $j$th graded piece of the $i$th local cohomology module of $I$ with respect to the (unique) graded maximal ideal $\m=(x_1,\dots,x_n)$ of $S$. One always has $\reg\,I\ge\alpha(I)$, where $\alpha(I)$ is the minimal degree among the homogeneous generators of $I$, and equality precisely characterizes those ideals with a linear resolution.
	
	A natural refinement of this notion arises in  the study of powers of ideals. If $I^k$ has a linear resolution for all $k\ge1$, we say that $I$ has {\em linear powers}. While it may be tempting to conjecture that the linear resolution of $I$ implies linear powers, counterexamples due to Terai and Sturmfels reveal that this implication fails in general and, in some cases, depends on the base field $K$.
	
	Terai showed that if the characteristic of $K$ is not $2$, the Stanley-Reisner ideal
	\begin{equation}\label{eq:Te}
		J\ =\ (abd,\,abf,\,ace,\,adc,\,aef,\,bde,\,bcf,\,bce,\,cdf,\,def)
	\end{equation}
	of the minimal triangulation of the projective plane has a linear resolution, while $J^2$ does not have a linear resolution. This example is characteristic-dependent.
	
	Sturmfels \cite{S} provided another example of an ideal
	\begin{equation}\label{eq:St}
		J\ =\ (def,\,cef,\,cdf,\,cde,\,bef,\,bcd,\,acf,\,ade)
	\end{equation}
	with a linear resolution, indeed even linear quotients (a concept which we recall below), for which $J^2$ does not have a linear resolution. This example does not depend on $K$.
	
	For a monomial ideal $I\subset S$, let $\mathcal{G}(I)$ be the (unique) minimal monomial generating set of $I$. We say that $I$ has {\em linear quotients} if there exists an order $u_1,\dots,u_m$ on $\mathcal{G}(I)$ such that $(u_1,\dots,u_{j-1}):(u_j)$ is generated by variables, for all $j=2,\dots,m$. If $I$ is generated in a single degree and has linear quotients, then it has a linear resolution, regardless of the choice of the field $K$.
	
	For an integer $0\le d\le n$, we set
	$$
	\mathcal{I}_{n,d}(K)=\{I\subset S:\ I\ \textup{is a squarefree monomial ideal with a}\ d\textup{-linear resolution}\}.
	$$
	
	In the light of the above discussions and examples (\ref{eq:Te})-(\ref{eq:St}), the following fundamental questions have long remained elusive and unsolved.
	\begin{Question}\label{Question}
		When is $\mathcal{I}_{n,d}(K)$ independent of $K$?
	\end{Question}
	\begin{Question}\label{Question1}
		For which degrees $d$, does every squarefree monomial ideal $I\subset S$ generated in degree $d$ have a linear resolution, if and only if, it has linear powers?
	\end{Question}
	
	That is, for which integers $n$ and $d$, does there exist a uniform combinatorial characterization of all squarefree monomial ideals generated in degree $d$, that have linear resolutions or even linear powers? And for which degrees $d$, having a linear resolution and having linear powers are equivalent properties for all squarefree monomial ideals of $S$ generated in degree $d$? In this paper, we will solve these problems.
	
	Any squarefree monomial ideal $I\subset S$ generated in degree $d\in\{0,1,n{-}1,n\}$ is polymatroidal. A monomial ideal $I\subset S$ is called {\em polymatroidal} if the exponent vectors of the monomials of $\mathcal{G}(I)$ form the set of bases of a discrete polymatroid (cf \cite[Chapter 12]{HHBook}). By a result of Bandari and Rahmati-Asghar \cite{BR}, a monomial ideal $I\subset S$ is polymatroidal, if and only if, $I$ is generated in a single degree and it has linear quotients with respect to the lexicographic order induced by any ordering of the variables. Since all powers of a polymatroidal ideal are again polymatroidal, the linear resolution property of any power of $I$ is independent from the base field $K$.
	
	In view of examples (\ref{eq:Te}) and (\ref{eq:St}), the first interesting case occurs when $d=2$. Then $I$ can be viewed as an edge ideal $$I(G)\ =\ (x_ix_j:\ \{i,j\}\in E(G))$$ of a finite simple graph $G$ on vertex set $V(G)=[n]$ with edge set $E(G)$. Recall that the complementary graph of $G$ is the graph $G^c$ with $V(G^c)=V(G)$ whose edges are the non-edges of $G$. Herzog, Hibi and Zheng proved the following influential result, see \cite{HHZ} and \cite[Corollary 3.2]{HHDepth}. See also \cite{BDMS} and \cite{FShort}, for a short proof. 
	
	\begin{TheoremAlph}\label{ThmA}
		Let $I\subset S$ be a squarefree monomial ideal generated in degree $2$. Write $I=I(G)$. The following conditions are equivalent.
		\begin{enumerate}
			\item[\textup{(a)}] $I$ has a linear resolution.
			\item[\textup{(b)}] $I^k$ has linear quotients, for all $k\ge1$.
			\item[\textup{(c)}] $G^c$ is chordal.
		\end{enumerate}
	\end{TheoremAlph}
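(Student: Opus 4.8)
The plan is to prove the cycle of implications $\textup{(b)}\Rightarrow\textup{(a)}\Rightarrow\textup{(c)}\Rightarrow\textup{(b)}$. The implication $\textup{(b)}\Rightarrow\textup{(a)}$ is immediate: specializing to $k=1$ shows that $I$ itself has linear quotients, and since $I$ is generated in the single degree $2$, the fact recalled above yields that $I$ has a linear resolution. All of the content therefore lies in $\textup{(a)}\Rightarrow\textup{(c)}$ (Fr\"oberg's necessity of chordality) and in $\textup{(c)}\Rightarrow\textup{(b)}$.

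For $\textup{(a)}\Rightarrow\textup{(c)}$ I would argue by contraposition using Hochster's formula. Write $I=I_\Delta$, so that $\Delta=\textup{Ind}(G)$ is the independence complex of $G$: a subset of $[n]$ is a face of $\Delta$ precisely when it is a clique of $G^c$, and hence for every $W\subseteq[n]$ the restriction $\Delta_W$ is the clique complex of the induced graph $(G^c)[W]$. Suppose $G^c$ is not chordal and let $W$ be the vertex set of an induced cycle $C_m$ in $G^c$ with $m\ge 4$. Since $C_m$ is triangle-free, its clique complex is the $1$-dimensional cycle $C_m$ itself, so $\Delta_W\simeq S^1$ and $\tilde H_1(\Delta_W;K)\ne 0$. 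Hochster's formula
$$
\beta_{i,j}(S/I)\ =\ \sum_{W\subseteq[n],\,|W|=j}\dim_K\tilde H_{\,j-i-1}(\Delta_W;K)
$$
then forces $\beta_{m-2,m}(S/I)\ne 0$, equivalently $\beta_{m-3,m}(I)\ne 0$. As $m-(m-3)=3\ne 2$, this Betti number lies off the linear strand $\beta_{p,p+2}$, so $I$ fails to have a $2$-linear resolution; contrapositively, $\textup{(a)}$ implies $\textup{(c)}$.

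The substantive direction is $\textup{(c)}\Rightarrow\textup{(b)}$, which I would establish by induction on $n$ via Dirac's theorem. Since $G^c$ is chordal it has a simplicial vertex, which after relabeling we take to be $n$; thus the $G$-non-neighbours of $n$ form an independent set of $G$, equivalently a clique of $G^c$. Deleting $n$ leaves $(G\setminus n)^c=(G^c)\setminus n$ chordal, so by induction every power of $I(G\setminus n)$ has linear quotients. To propagate this to $I(G)^k$, I would fix on $\mathcal G(I(G)^k)$ the order induced by a perfect elimination ordering of $G^c$, placing the generators not divisible by $x_n$ (those coming from $I(G\setminus n)^k$) first, and then verify that for each remaining generator $u$ the colon ideal $(\textup{earlier generators}):(u)$ is generated by variables. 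The simpliciality of $n$ in $G^c$ is exactly what guarantees that whenever an edge incident to $n$ must be exchanged, the required variable-multiple already occurs among the earlier generators.

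The main obstacle is precisely this last verification: controlling the colon ideals \emph{uniformly across all powers} $k$, rather than only for $I$ itself. The difficulty is that a generator of $I(G)^k$ is a product of $k$ edges and need not be squarefree, so the clean exchange arguments available for $I$ must be upgraded to track multiplicities, and one must check that the elimination order never produces a colon generator of degree greater than one. The chordality of $G^c$, encoded in its perfect elimination ordering, is the combinatorial input that makes every such exchange available; carrying this through the inductive step, with the non-squarefree bookkeeping, is where the real work lies. Once $\textup{(c)}\Rightarrow\textup{(b)}$ is secured, the cycle closes and the three conditions are equivalent.
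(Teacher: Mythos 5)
Your implications \textup{(b)} $\Rightarrow$ \textup{(a)} and \textup{(a)} $\Rightarrow$ \textup{(c)} are correct: the first is the standard fact for ideals generated in a single degree, and your Hochster-formula computation correctly extracts the nonzero Betti number $\beta_{m-3,m}(I)$ off the linear strand from an induced $m$-cycle ($m\ge 4$) of $G^c$. The genuine gap is \textup{(c)} $\Rightarrow$ \textup{(b)}, which is the entire substance of the theorem, and which you yourself flag as unverified. The obstacle is concrete: a minimal generator of $I(G)^k$ admits, in general, many distinct factorizations into $k$ edges, so an exchange argument that works edge-by-edge along a perfect elimination ordering of $G^c$ does not even determine which factorization of a given generator should be modified; your sketch never addresses this ambiguity. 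Moreover, already for $k=1$ the order you propose (generators coming from $I(G\setminus n)^k$ first, then the rest) requires a careful, unspecified choice of order among the edges incident to the simplicial vertex $n$ in order for the colon ideals to be generated by variables. As written, \textup{(c)} $\Rightarrow$ \textup{(b)} is a plan, not a proof.

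For comparison: the paper does not prove Theorem \ref{ThmA} at all; it quotes it from \cite{HHZ} and \cite[Corollary 3.2]{HHDepth}, with short proofs in \cite{BDMS} and \cite{FShort}. In those sources the implication \textup{(c)} $\Rightarrow$ \textup{(b)} is obtained by passing to the Rees algebra: when $G^c$ is chordal, the defining toric ideal of $\mathcal{R}(I(G))$ admits a Gr\"obner basis whose initial monomials satisfy the $x$-condition of Theorem \ref{Thm:regx}(c), and a transfer theorem of the kind this paper generalizes (Theorems \ref{GeneralStatementX} and \ref{GeneralStatementY}, on $x$-dominant and $y$-dominant orders) then yields linear quotients for every power $I(G)^k$ simultaneously. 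Working inside the Rees algebra is exactly how the ``uniformly across all powers'' difficulty you identify is circumvented: one controls the relations of all powers at once, instead of building a quotients order power by power. To close your gap you would either have to carry out this Rees-algebra argument, or fully specify and verify an explicit linear quotients order on $\mathcal{G}(I(G)^k)$ as is done in \cite{BDMS}; either one constitutes the real content of the theorem.
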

	
	For squarefree quadratic monomial ideals, the bad behavior of example (\ref{eq:St}) does not occur: if $I\subset S$ has a $2$-linear resolution, then $I^k$ has a $2k$-linear resolution, for all $k\ge1$. The equivalence (a) $\Leftrightarrow$ (b) in Theorem \ref{ThmA} holds as well for the ideals $I\in\mathcal{I}_{n,d}(K)$ with $d\in\{0,1,n{-}1,n\}$, because any such ideal is polymatroidal and powers of polymatroidal ideals are polymatroidal \cite[Theorem 12.6.3]{HHBook}.
	
	\smallskip
	The ideals (\ref{eq:Te}), (\ref{eq:St}) are generated in degree $3$ and live in a polynomial ring in $6$ variables. This fact suggests that Theorem \ref{ThmA} could be extended in degree $d=n{-}2$.
	
	Let $I\subset S$ be a squarefree monomial ideal generated in degree $n{-}2$. Given a non-empty subset $F$ of $[n]$, we put ${\bf x}_F=\prod_{i\in F}x_i$. Then $\mathcal{G}(I)=\{{\bf x}_{[n]\setminus e_1},\dots,{\bf x}_{[n]\setminus e_m}\}$ where $\{e_1,\dots,e_m\}$ is the edge set of a finite simple graph $G$ on $[n]$. Conversely, given a finite simple graph $G$, in \cite{FM1} we introduce the {\em complementary edge ideal} of $G$ as
	$$
	I_c(G)\ =\ ({\bf x}_{[n]\setminus e}:\ e\in E(G)).
	$$
	
	Any squarefree monomial ideal generated in degree $n{-}2$ is the complementary edge ideal of some graph $G$. We denote by $c(G)$ the number of connected components of $G$ having more than one vertex. We have the following extension of Theorem \ref{ThmA}.
	\begin{TheoremAlph}\label{ThmB}
		Let $I\subset S$ be a squarefree monomial ideal generated in degree $n{-}2$. Write $I=I_c(G)$. The following conditions are equivalent.
		\begin{enumerate}
			\item[\textup{(a)}] $I$ has a linear resolution.
			\item[\textup{(b)}] $I^k$ has linear quotients, for all $k\ge1$.
			\item[\textup{(c)}] $c(G)=1$.
		\end{enumerate}
	\end{TheoremAlph}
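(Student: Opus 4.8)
The three implications split naturally. The implication (b)$\Rightarrow$(a) is immediate, since an ideal generated in a single degree that has linear quotients has a linear resolution. Thus it remains to prove (a)$\Rightarrow$(c) and (c)$\Rightarrow$(b), and I would treat the two separately: a homological obstruction for the former, and an explicit linear-quotient order on the generators of every power for the latter.

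For (a)$\Rightarrow$(c) I argue by contraposition. Suppose $c(G)\ge2$ and let $V_1$ be the vertex set of one connected component of $G$ containing an edge, with $V_2=[n]\setminus V_1$; then $G_2=G|_{V_2}$ also contains an edge and there are no edges between $V_1$ and $V_2$. Writing $n_i=|V_i|$ and grouping the generators of $I=I_c(G)$ according to whether the defining edge lies in $V_1$ or $V_2$, one gets $I=A+B$ with $A=\mathbf{x}_{V_2}\,I_c(G|_{V_1})S$ and $B=\mathbf{x}_{V_1}\,I_c(G|_{V_2})S$. A direct least common multiple computation shows that every pairwise lcm of a generator of $A$ and a generator of $B$ equals $\mathbf{x}_{[n]}$, so $A\cap B=(\mathbf{x}_{[n]})$. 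The Mayer--Vietoris sequence $0\to(\mathbf{x}_{[n]})\to A\oplus B\to I\to0$ then controls the regularity: since $A\cong I_c(G|_{V_1})S(-n_2)$ and $\reg I_c(G|_{V_1})\le n_1-1$ (a squarefree ideal in $n_1$ variables generated in degree $n_1-2$ cannot have $\reg=n_1$), we get $\reg A\le n-1$ and likewise $\reg B\le n-1$, whence $\reg(A\oplus B)\le n-1$. As $\reg(\mathbf{x}_{[n]})=n$, the standard estimate $\reg(A\cap B)\le\max\{\reg(A\oplus B),\reg I+1\}$ forces $\reg I\ge n-1>n-2$, so $I$ has no linear resolution.

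For (c)$\Rightarrow$(b) I first reduce to the connected case: if $c(G)=1$, every isolated vertex contributes a variable dividing all generators, so $I_c(G)=\mathbf{x}_F\cdot I_c(G')$ where $G'$ is the unique nontrivial component and $\mathbf{x}_F$ involves disjoint variables; since the colon ideals are unchanged after cancelling a common factor, $I_c(G)^k$ has linear quotients if and only if $I_c(G')^k$ does. So assume $G$ is connected on $[n]$. The generators of $I_c(G)^k$ are exactly the monomials $\mathbf{x}_{[n]}^{\,k}/\mathbf{x}^{\mathbf{d}}$, where $\mathbf{d}=(d_1,\dots,d_n)$ ranges over the degree sequences of the $k$-edge sub-multigraphs of $G$ (here $d_j$ counts the chosen edges through $j$); for two such generators one computes the colon to be the monomial $\mathbf{x}^{(\mathbf{d}-\mathbf{d}')_+}$. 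I would order the generators by increasing lexicographic order of $\mathbf{d}$ and verify linear quotients in this order.

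The heart of the matter is the following exchange property, which is where connectedness is used: if $\mathbf{d}'<_{\mathrm{lex}}\mathbf{d}$ are both realizable and $p$ is the first index at which they differ (so $d_p>d'_p$), then there is an index $\ell>p$ such that $\mathbf{d}-\mathbf{e}_p+\mathbf{e}_\ell$ is again realizable. Granting this, $\mathbf{d}-\mathbf{e}_p+\mathbf{e}_\ell$ is lexicographically smaller than $\mathbf{d}$ and its colon into $\mathbf{x}_{[n]}^{\,k}/\mathbf{x}^{\mathbf{d}}$ is the single variable $x_p$, which divides $\mathbf{x}^{(\mathbf{d}-\mathbf{d}')_+}$; applied to every earlier $\mathbf{d}'$ this is exactly the condition for linear quotients. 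The exchange property itself I would prove by taking realizing multisets $F,F'$ of $\mathbf{d},\mathbf{d}'$ and pushing one unit of degree out of $p$ along an alternating walk built from the edges of $F$ and $F'$, the walk terminating at a vertex where $\mathbf{d}$ falls below $\mathbf{d}'$ (necessarily of index $>p$, since the two sequences agree below $p$ and have equal sum). I expect this to be the main obstacle: the realizable degree sequences do \emph{not} form a discrete polymatroid in general (already for a path on five vertices the symmetric exchange fails), so one cannot simply invoke polymatroidality, and the alternating-walk construction must genuinely exploit the connectivity of $G$ to guarantee that a suitable higher-indexed target $\ell$ is reachable and that the rerouted multiset still uses only edges of $G$.
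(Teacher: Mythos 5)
Your implications (b)$\Rightarrow$(a) and (a)$\Rightarrow$(c) are correct, and the latter takes a genuinely different route from the paper. The paper proves (a)$\Rightarrow$(c) via Alexander duality: by Eagon--Reiner, $I$ has a linear resolution iff $S/I^\vee$ is Cohen--Macaulay, where $I^\vee$ is the Stanley--Reisner ideal of the one-dimensional complex whose facets are the edges of $G$, and then Reisner's criterion gives connectedness. Your splitting $I=A+B$ along a component, the observation $A\cap B=(\mathbf{x}_{[n]})$, and the regularity estimate from $0\to A\cap B\to A\oplus B\to I\to 0$ is more elementary and equally valid; the one assertion you leave unproved, $\reg I_c(G|_{V_1})\le n_1-1$, does hold, since squarefree monomial ideals have all Betti numbers in squarefree degrees, so regularity $n_1$ would force a minimal generator of degree $n_1$.

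The implication (c)$\Rightarrow$(b), however, has a genuine gap, exactly at the step you flagged: your exchange property is \emph{false}, and your quotient order fails, when the lex order is taken with respect to an arbitrary labeling of the vertices; connectivity of $G$ alone is not enough. Concretely, let $G$ be the path on $[5]$ with edges $\{2,5\},\{1,5\},\{1,3\},\{3,4\}$, and take $k=1$. The realizable sequences $\mathbf{d}=(0,1,0,0,1)$ (edge $\{2,5\}$) and $\mathbf{d}'=(0,0,1,1,0)$ (edge $\{3,4\}$) first differ at $p=2$ with $d_2>d'_2$; an exchange would require some $\ell>2$ with $\mathbf{e}_5+\mathbf{e}_\ell$ realizable, i.e.\ an edge $\{5,\ell\}$ with $\ell\in\{3,4\}$, but the neighbors of $5$ are $1$ and $2$. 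Correspondingly, in increasing lex order of $\mathbf{d}$ the first two generators of $I_c(G)$ are $x_1x_2x_5$ and $x_1x_3x_4$, and $(x_1x_2x_5):(x_1x_3x_4)=(x_2x_5)$ is not generated by variables, so linear quotients fails already at the second step --- even though $I_c(G)$ does have linear quotients (the order $x_2x_3x_4,\,x_1x_3x_4,\,x_2x_4x_5,\,x_1x_2x_5$ works). The same example exposes the flaw in the alternating-walk argument: the maximal alternating trail starting at $p=2$ is the single $F$-edge $\{2,5\}$, which terminates at the vertex $5$, where $\mathbf{d}$ \emph{exceeds} $\mathbf{d}'$; nothing forces such a trail to end at a vertex where $\mathbf{d}$ falls below $\mathbf{d}'$.

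The missing idea is a preliminary relabeling. Since having linear quotients is invariant under permuting the variables, one may first replace the given labeling by one adapted to $G$, and this is precisely what the paper does in Theorem \ref{LinearPowers}: peeling off leaves of a spanning tree, it chooses a labeling with $G\setminus\{1,\dots,i\}$ connected for every $i$, proves the $x$-condition for the corresponding lex order via primitive even closed walks in the cone graph $G^*$, and then invokes Theorem \ref{GeneralStatementX} to get linear quotients of all powers. Your counterexample is exactly a labeling violating this condition ($G[\{3,4,5\}]$ is disconnected). So your strategy is likely repairable, but the relabeling step --- and an exchange argument that uses connectedness of the induced subgraphs on upper intervals of vertices, not just of $G$ --- is essential, not cosmetic: without it, both the exchange property and the conclusion are false as stated.
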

	
	In order to prove Theorem \ref{ThmB}, we follow the strategy outlined in \cite{HHZ}. Let $I\subset S$ be a monomial ideal generated in a single degree with $\mathcal{G}(I)=\{u_1,\dots,u_m\}$. The Rees algebra of $I$ is defined as the $K$-subalgebra $\mathcal{R}(I)=\bigoplus_{k\ge0}I^kt^k$ of $S[t]$. Let
	$$
	\varphi:\ T=S[y_1,\dots,y_m]\rightarrow\mathcal{R}(I)
	$$
	be the $S$-algebra homomorphism defined by $\varphi(y_j)=u_jt$, for all $j=1,\dots,m$, where $\deg_T(x_i)=(1,0)$ and $\deg_T(y_j)=(\deg_S(u_j),1)$. Then $\mathcal{R}(I)\cong T/J$, where $J=\Ker\,\varphi$ is a toric ideal. The \textit{$x$-regularity} of $\mathcal{R}(I)$ is defined as
	$$
	\reg_x\mathcal{R}(I)\ =\ \max_{i,j}\{a_{i,j}-i\},
	$$
	where $$\FF\ :\ 0\rightarrow F_p\rightarrow F_{p-1}\rightarrow\cdots\rightarrow F_0\rightarrow\mathcal{R}(I)\rightarrow0$$ is the minimal bigraded free $T$-resolution of $\mathcal{R}(I)$ with $F_i=\bigoplus_jT(-a_{i,j},-b_{i,j})$ for all $i$. The following results were proved in \cite{Ro}, \cite{BC} and \cite{HHZ}, respectively.
	
	\begin{Theorem}\label{Thm:regx}
		Let $I\subset S$ be a graded ideal generated in degree $d$. The following statements hold.
		\begin{enumerate}
			\item[\textup{(a)}] $\reg\,I^k\le dk+\reg_x\mathcal{R}(I)$ for all $k\ge1$.
			\item[\textup{(b)}] $I$ has linear powers, if and only if, $\reg_x\mathcal{R}(I)=0$.
			\item[\textup{(c)}] \textup{(\textbf{The $x$-condition})} If there exists a monomial order $<$ on $T$ such that $\ini_<(J)$ is generated by monomials of the form $uv$ with $u\in S$ of degree $\le1$ and $v\in K[y_1,\dots,y_m]$, then $\reg_x\mathcal{R}(I)=0$.
		\end{enumerate}
	\end{Theorem}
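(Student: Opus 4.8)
The plan is to reduce all three statements to a careful bookkeeping of the bigraded shifts in the minimal free $T$-resolution $\FF$ of $\mathcal{R}(I)$, exploiting that extracting a graded component in the second ($y$-)grading is an exact functor. For part (a), fix $k\ge1$ and pass to the $y$-degree $k$ strand of $\FF$; this yields a (generally non-minimal) $\mathbb{Z}$-graded free $S$-resolution of the component $\mathcal{R}(I)_{(\ast,k)}=I^kt^k\cong I^k$. A summand $T(-a_{i,j},-b_{i,j})$ contributes in $y$-degree $k$ exactly the free modules $S(-a_{i,j}-d(k-b_{i,j}))$, one for each $y$-monomial of degree $k-b_{i,j}$ (and nothing once $b_{i,j}>k$). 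I would then invoke the standard bound that the regularity of a module is at most $\max_i(t_i-i)$ for any free resolution whose $i$th term is generated in degrees $\le t_i$. This gives $\reg I^k\le\max_{i,j}\{a_{i,j}+d(k-b_{i,j})-i\}$, and since every $b_{i,j}\ge0$ and $d\ge0$ we have $a_{i,j}+d(k-b_{i,j})-i\le dk+(a_{i,j}-i)\le dk+\reg_x\mathcal{R}(I)$, which is the claim.

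For part (b), one implication is immediate from (a): if $\reg_x\mathcal{R}(I)=0$ then $\reg I^k\le dk$, while $\reg I^k\ge dk$ automatically since $I^k$ is generated in degree $dk$, so every power is linearly resolved. For the converse I would pass to local cohomology. Writing $\m$ also for its extension $(x_1,\dots,x_n)T$, the fact that the $y_j$ behave as polynomial variables over $S$ gives the bigraded identification $\textup{H}_\m^i(\mathcal{R}(I))_{(\ast,k)}\cong\textup{H}_\m^i(I^k)$ for all $i,k$. A bigraded Eisenbud--Goto type local duality then converts the resolution definition of $\reg_x$ into local cohomology and produces the exact formula $\reg_x\mathcal{R}(I)=\max_{k\ge0}\{\reg I^k-dk\}$, the maximum being finite because $\FF$ is finite over the polynomial ring $T$. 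From this formula the converse is transparent: linear powers mean $\reg I^k-dk=0$ for every $k$, forcing $\reg_x\mathcal{R}(I)=0$.

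Part (c) I would deduce by Gröbner degeneration. With $<$ the order of the hypothesis, upper semicontinuity of bigraded Betti numbers under passage to the initial ideal gives $\beta_{i,(a,b)}(T/J)\le\beta_{i,(a,b)}(T/\ini_<(J))$ for all $i,(a,b)$, hence $\reg_x\mathcal{R}(I)=\reg_x(T/J)\le\reg_x(T/\ini_<(J))$. It then suffices to bound the $x$-regularity of the monomial quotient, for which I would read off the Taylor complex. Each minimal generator of $\ini_<(J)$ is of the form $uv$ with $\deg_S u\le1$, so its $x$-part is either $1$ or a single variable; therefore the $\lcm$ of any $i$ of them is squarefree in $x$ of degree at most $i$, so the Taylor term in homological degree $i$ is generated in $x$-degree $\le i$. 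Thus $\reg_x(T/\ini_<(J))\le0$, and since the surjection $T\twoheadrightarrow\mathcal{R}(I)$ forces $\reg_x\ge0$, we conclude $\reg_x\mathcal{R}(I)=0$.

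The hard part is the converse direction in (b). Parts (a) and (c) are genuine bookkeeping of bigraded shifts, but equating the resolution-theoretic $\reg_x$ with $\max_k\{\reg I^k-dk\}$ requires controlling the cancellation that arises when the minimal $T$-resolution is restricted to a single $y$-degree: the extremal $x$-shift of $\mathcal{R}(I)$ need not survive in the minimal $S$-resolution of an individual power $I^k$. The clean route around this obstacle is exactly the local-cohomology reformulation above, which bypasses minimality of the restricted complexes entirely.
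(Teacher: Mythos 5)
First, a remark on the ground truth: the paper does not prove Theorem \ref{Thm:regx} at all — it quotes parts (a), (b), (c) from \cite{Ro}, \cite{BC} and \cite{HHZ} respectively, so your attempt can only be measured against those sources. Your part (a) is the standard strand-restriction argument and is correct; your part (c) (Gr\"obner degeneration plus the Taylor complex) is also the standard argument and is correct in substance. However, note an internal inconsistency: part (c) silently uses the standard bigrading $\deg y_j=(0,1)$, whereas your bookkeeping in part (a) uses the paper's stated convention $\deg y_j=(d,1)$. These are not interchangeable: under the paper's literal convention, statements (b) and (c) are actually false — for $I=(x_1,x_2)\subset K[x_1,x_2]$ the Rees ideal is $J=(x_1y_2-x_2y_1)$, whose generator has bidegree $(2,1)$, so $\reg_x\mathcal{R}(I)=1$ even though $I$ has linear powers and the $x$-condition holds. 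Any correct proof must fix the convention $\deg y_j=(0,1)$ (equivalently, redefine $\reg_x$ as $\max_{i,j}\{a_{i,j}-db_{i,j}-i\}$) throughout; with that convention the strand of $T(-a,-b)$ in $y$-degree $k$ consists of copies of $S(-a)$ and resolves $I^k(dk)$, and your argument for (a) goes through after this adjustment.

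The genuine gap is the converse direction of (b). You rest it entirely on the asserted identity $\reg_x\mathcal{R}(I)=\max_k\{\reg\,I^k-dk\}$, justified only by an appeal to a ``bigraded Eisenbud--Goto type local duality''. This identity is precisely the hard content of the theorem, and it does not follow from any routine duality. Concretely: your identification $\textup{H}^i_{\m}(\mathcal{R}(I))_{(\ast,k)}\cong \textup{H}^i_{\m}(I^k)$ is correct, but with the grading you set up in part (a) it yields $\sup\{a+i:\textup{H}^i_{\m}(\mathcal{R}(I))_{(a,k)}\ne 0\}=\reg\,I^k$, so the naive cohomological $x$-regularity equals $\sup_k\reg\,I^k=+\infty$, not $\reg_x\mathcal{R}(I)$; the crucial normalization $-dk$ is not produced by duality but is exactly what has to be proved to match the resolution-side invariant. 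And that matching is where the classical Eisenbud--Goto argument breaks in the bigraded setting: minimality of the $T$-resolution no longer forces the differential entries to have positive $x$-degree (entries can be units times pure $y$-monomials), so an extremal $x$-shift need not survive into $\textup{Ext}$ or local cohomology; moreover, relative local duality over the base $K[y_1,\dots,y_m]$ fails in its naive form, since the relevant $\textup{Ext}$-modules are typically torsion over $K[y_1,\dots,y_m]$ and are killed by $\textup{Hom}_{K[y_1,\dots,y_m]}(-,K[y_1,\dots,y_m])$. A proof that actually works (essentially the one in \cite{BC}) avoids local cohomology entirely: resolving $T/\m T$ by the Koszul complex on $x_1,\dots,x_n$ gives $\Tor_q^T(\mathcal{R}(I),T/\m T)_{(\ast,k)}\cong\Tor_q^S(I^k,K)$; linear powers force each of these to be concentrated in a single $x$-degree, namely $q$; feeding this into the change-of-rings spectral sequence $\Tor_p^{T/\m T}\bigl(\Tor_q^T(\mathcal{R}(I),T/\m T),K\bigr)\Rightarrow\Tor_{p+q}^T(\mathcal{R}(I),K)$ shows that every bigraded Betti number of $\mathcal{R}(I)$ in homological degree $i$ has $x$-degree at most $i$, i.e.\ $\reg_x\mathcal{R}(I)=0$. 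Until a replacement of this kind is supplied, your proof of (b) is incomplete.
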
\medskip
	
	To establish Theorem \ref{ThmB}, we apply Theorem \ref{Thm:regx}(c). By itself, condition (c), which is usually called the {\em $x$-condition} (see \cite{HHM,HHZ,HM}), only guarantees that $I$ has linear powers. In Section \ref{sec2}, we introduce the concepts of $x$-dominant and $y$-dominant orders, and we prove that if condition (c) is satisfied with respect to such monomial orders, then $I$ has not only linear powers, but also linear quotients, for all $k\ge1$ (see Theorems \ref{GeneralStatementX} and \ref{GeneralStatementY}). The order used in the proof of Theorem \ref{LinearPowers} is $x$-dominant and this together with Corollary \ref{Cord=n-2} establishes Theorem \ref{ThmB}.
	
	\medskip
	In order to fully address Questions \ref{Question} and \ref{Question1}, it remains to consider the degrees $3\le d\le n{-}3$. Notice that for such a $d$ to exist we must have $n\ge6$. When $n=6$, the ideal (\ref{eq:Te}) shows that the set $\mathcal{I}_{6,3}(K)$ depends on the ground field $K$.\smallskip
	
	Recall that the support of a monomial $u\in S$ is the set $\supp(u)=\{i:x_i\ \textup{divides}\ u\}$. The support of a monomial ideal $I\subset S$ is defined as $\supp(I)=\bigcup_{u\in\mathcal{G}(I)}\supp(u).$ We say that $I\subset S$ is {\em fully-supported} if $\supp(I)=[n]$.\medskip
	
	Generalizing the ideal in (\ref{eq:Te}), we show that 
	
	\begin{TheoremAlph}\label{ThmC}
		Let $n\ge6$. For each $3\le d\le n{-}3$, there exists a fully-supported squarefree monomial ideal $I\subset S$ generated in degree $d$ such that
		$$
		\reg\,I\ =\ \begin{cases}
			d+1&\textit{if}\ \textup{char}(K)=2,\\
			\hfil d&\textit{otherwise}.
		\end{cases}
		$$
	\end{TheoremAlph}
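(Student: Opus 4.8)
The plan is to reduce the entire family to a single seed together with two elementary operations that move the pair (generating degree, number of variables) while preserving the \emph{precise} characteristic-dependent value of the regularity. Let $(\star)$ denote the conjunction: $I$ is a fully-supported squarefree monomial ideal, generated in a single degree $d$, with $\reg I=d$ when $\textup{char}(K)\neq 2$ and $\reg I=d+1$ when $\textup{char}(K)=2$. The seed is Terai's ideal $J$ from (\ref{eq:Te}), which satisfies $(\star)$ with $d=3$, $n=6$: it is the Stanley--Reisner ideal of a $6$-vertex triangulation $\Delta$ of $\mathbb{RP}^2$, so by Hochster's formula $\reg J=2+\max\{\ell:\widetilde{H}_\ell(\Delta_W;K)\neq0\text{ for some }W\subseteq[6]\}$. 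Since $\dim\Delta=2$, every induced subcomplex has vanishing reduced homology above degree $2$, whence $\reg J\le4$; the class $\widetilde{H}_2(\mathbb{RP}^2;K)\neq0$ precisely when $\textup{char}(K)=2$ forces $\reg J=4$ in that case, while for $\textup{char}(K)\neq2$ the ideal has a linear resolution, i.e. $\reg J=3$, exactly the behavior recorded after (\ref{eq:Te}).

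The first operation multiplies by a fresh variable. If $I\subset K[x_1,\dots,x_n]$ satisfies $(\star)$ in degree $d$, set $I'=x_{n+1}I\subset S'=K[x_1,\dots,x_{n+1}]$. Then $I'$ is squarefree, equigenerated in degree $d+1$, and $\supp(I')=\supp(I)\cup\{n+1\}=[n+1]$, so $I'$ is fully-supported. Extending scalars along $S\hookrightarrow S'$ leaves the graded Betti numbers unchanged, and multiplication by the nonzerodivisor $x_{n+1}$ gives $x_{n+1}(IS')\cong(IS')(-1)$; hence $\reg I'=\reg(IS')+1=\reg I+1$ over every field. Thus $I'$ satisfies $(\star)$ in degree $d+1$ with $n+1$ variables. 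This step raises both $d$ and $n$ by one, keeping $n-d$ fixed.

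The second operation duplicates a supported variable, i.e. applies the expansion operation of Bayati and Herzog. Fix $j\in\supp(I)$ and expand $x_j$ into two variables $x_j,x_{n+1}$: replace each minimal generator $x_jh$ of $I$ by the pair $x_jh,\,x_{n+1}h$, leaving the generators not divisible by $x_j$ untouched. The resulting ideal $I^*\subset K[x_1,\dots,x_{n+1}]$ is again squarefree and equigenerated in degree $d$, and since $j\in\supp(I)$ the new variable $x_{n+1}$ occurs in a generator, so $\supp(I^*)=[n+1]$. The expansion operation preserves regularity, $\reg I^*=\reg I$, and this comparison is carried out over an arbitrary field by a construction that does not involve $K$; consequently the characteristic-dependent value of $\reg I$ is inherited verbatim, and $I^*$ satisfies $(\star)$ in degree $d$ with $n+1$ variables, raising $n-d$ by one. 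Combining the two operations, I would start from $J$ at $(d,n)=(3,6)$, apply the first operation $d-3$ times to reach a $(\star)$-ideal in degree $d$ with $d+3$ variables, and then the second operation $n-d-3$ times to reach degree $d$ with $n$ variables; the constraints $d-3\ge0$ and $n-d-3\ge0$ are exactly $3\le d\le n-3$, producing the required ideal for every admissible pair.

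The routine points are the behavior of support and generating degree under the two operations. The load-bearing inputs, and the place where I expect the real work to lie, are twofold: first, pinning down the \emph{exact} value $\reg J=4$ in characteristic $2$ (rather than merely $\reg J\ge4$), which is where the $2$-dimensionality of the $\mathbb{RP}^2$ triangulation is used to cap the jump at $d+1$; and second, the assertion that the expansion operation preserves regularity over \emph{every} fixed field, so that the statement transported to each $I^*$ is the sharp dichotomy ``linear if and only if $\textup{char}(K)\neq2$'' and not just ``the linear-resolution property is preserved over a fixed $K$''. Verifying this characteristic-freeness of the expansion comparison is the main obstacle.
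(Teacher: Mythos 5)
Your argument is correct, but it takes a genuinely different route from the paper's. Both proofs start from the same seed --- Terai's ideal (\ref{eq:Te}) with its characteristic-dependent regularity, quoted rather than reproved in both cases --- and both raise the degree by multiplying with $d-3$ fresh variables (the paper multiplies by $\prod_{i=1}^{d-3}y_i$ in one step, you iterate). The divergence is in how the remaining $n-d-3$ variables are brought into the support at fixed degree. The paper adjoins a companion ideal: with $L$ the squarefree degree-$2$ Veronese on the six Terai variables, it sets $H=J+(y_{d-2},\dots,y_{n-6})L$ and proves, via the Betti splitting criterion of Lemma \ref{lemma:criteria-bs} applied to $W=J\cap (y_{d-2},\dots,y_{n-6})L=(y_{d-2},\dots,y_{n-6})J$, that $\reg H=\max\{\reg J,3\}$. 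You instead duplicate supported vertices $n-d-3$ times and invoke invariance of regularity under the Bayati--Herzog expansion over each fixed field. The step you flag as the ``main obstacle'' is in fact fine: this invariance is a known property of the expansion functor, and in the only case you need --- duplication of a single vertex $j$ of the complex $\Delta$ of a squarefree ideal equigenerated in degree at least $2$ --- it has a short characteristic-free proof. Writing $\mathrm{st}_\Delta(j)=\{G\in\Delta:\,G\cup\{j\}\in\Delta\}$, the duplicated complex is $\Delta^*=\Delta\cup B$ with $B=\mathrm{st}_\Delta(j)*\{n+1\}$ and $\Delta\cap B=\mathrm{st}_\Delta(j)$; since $B$ and $\mathrm{st}_\Delta(j)$ are cones, Mayer--Vietoris gives $\widetilde{H}_\ell(\Delta^*;K)\cong\widetilde{H}_\ell(\Delta;K)$ for all $\ell$, and every induced subcomplex of $\Delta^*$ is either an induced subcomplex of $\Delta$ (possibly after swapping $j$ and $n+1$) or a duplication of one, so Hochster's formula yields $\reg I_{\Delta^*}=\reg I_\Delta$ over every $K$ --- exactly the sharp transported dichotomy you need, including the upper bound $d+1$ in characteristic $2$. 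As for what each approach buys: the paper's construction produces one explicit ideal whose added generators are completely under control, and the identical construction is recycled in the proof of Theorem \ref{ThmD}, where linear quotients of the added part $(y_{d-2},\dots,y_{n-6})L$ are what matter; your expansion argument is more modular and would transport \emph{any} characteristic-sensitive example to all admissible pairs $(n,d)$, at the price of an external invariance theorem (or the Mayer--Vietoris argument above).
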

	
	Hence, for each degree $3\le d\le n{-}3$ the set $\mathcal{I}_{n,d}(K)$ actually depends on $K$.\medskip
	
	Finally, it remains to address Question \ref{Question1} for each degree $3\le d\le n{-}3$. Notice that for such a $d$ to exist we must have $n\ge6$. When $n=6$, the ideal (\ref{eq:St}) shows that $\mathcal{I}_{6,3}(K)\ne\mathcal{I}_{6,3}^*(K)$ for any field $K$.\smallskip
	
	In the following result, we show that except for the degrees $d\in\{0,1,2,n{-}2,n{-}1,n\}$, if $I\in\mathcal{I}_{n,d}(K)$ is fully-supported and has linear quotients, it is not necessarily true that $I^k$ has linear resolution for all $k\ge1$.\medskip
	
	Generalizing the ideal in (\ref{eq:St}), we prove
	
	\begin{TheoremAlph}\label{ThmD}
		Let $n\ge6$. For each $3\le d\le n{-}3$, there exists a fully-supported squarefree monomial ideal $I\subset S$ generated in degree $d$ such that $I$ has linear quotients and $\reg\,I^2=2d+1$. Hence, $I^2$ does not have a linear resolution.
	\end{TheoremAlph}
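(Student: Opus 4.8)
The plan is to reduce the statement to Sturmfels' ideal (\ref{eq:St}) by a product construction in disjoint variables. Write $J\subset K[x_{n-5},\dots,x_n]$ for the Sturmfels ideal (\ref{eq:St}), now regarded as living on the last six variables of $S$, and let $P\subset K[x_1,\dots,x_{n-6}]$ be the squarefree Veronese ideal generated by all squarefree monomials of degree $d-3$ in the first $n-6$ variables. I would then set
$$
I\ =\ J\cdot P.
$$
Since the two factors involve disjoint sets of variables, every element of $\mathcal{G}(I)$ is a product $u\cdot v$ with $u\in\mathcal{G}(J)$ of degree $3$ and $v\in\mathcal{G}(P)$ of degree $d-3$, so $I$ is squarefree and generated in degree $d$. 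Because $J$ is fully supported on $\{x_{n-5},\dots,x_n\}$ and, as $n-6\ge d-3$, the Veronese ideal $P$ is fully supported on $\{x_1,\dots,x_{n-6}\}$, we get $\supp(I)=[n]$. This construction requires $d-3\ge1$; the boundary case $d=3$ (where $P$ degenerates) is discussed below.

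For the homological properties I would argue as follows. First, $I$ has linear quotients: $J$ has linear quotients by Sturmfels' example, $P$ is polymatroidal and hence has linear quotients, and a product of two monomial ideals with linear quotients living in disjoint variables again has linear quotients. Concretely, ordering $\mathcal{G}(I)=\{u_iv_j\}$ first by the index of $v_j$ and then by the index of $u_i$, the colon $\big(\textup{earlier generators}\big):(u_iv_j)$ splits, along the two disjoint variable sets, into the colon ideals computed inside $J$ and $P$ separately; the former already gives variables, while each generator coming from a smaller $v$-index is divisible by a variable of $P$ that lies in the colon, so the whole colon is generated by variables. In particular $I$ has a linear resolution. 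Second, for the square I would use that $I^2=J^2\cdot P^2$ with $J^2\subset K[x_{n-5},\dots,x_n]$ and $P^2\subset K[x_1,\dots,x_{n-6}]$ supported on disjoint variables, whence $I^2\cong J^2\otimes_K P^2$ and regularity is additive:
$$
\reg\,I^2\ =\ \reg\,J^2+\reg\,P^2.
$$
Now $P$ is polymatroidal, so all its powers are polymatroidal and have linear resolutions; thus $\reg\,P^2=2(d-3)$. Combined with the base value $\reg\,J^2=7$ for Sturmfels' ideal, this gives $\reg\,I^2=7+2(d-3)=2d+1$, so $I^2$ is not linear, as claimed.

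The two points that require genuine care are the following. The base fact $\reg\,J^2=7$ (and not merely $\reg\,J^2>6$) must be pinned down for the Sturmfels ideal; this is a finite computation, but the exact value---not just the failure of linearity recorded after (\ref{eq:St})---is what forces the equality $\reg\,I^2=2d+1$ rather than an inequality. The real obstacle, however, is the degenerate case $d=3$ with $n>6$: there $P$ would have degree $0$ and cannot supply the missing variables, so the product collapses to an extension of $J$, which is not fully supported on $[n]$. Here one cannot split off a lower-degree bad factor, since by Theorem \ref{ThmA} no squarefree quadratic ideal exhibits the Sturmfels phenomenon and degree-$1$ ideals are polymatroidal; the entire non-linearity must be carried by a single cubic. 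One is therefore forced to extend Sturmfels' ideal itself to a fully-supported cubic squarefree ideal on all of $[n]$, for instance by adjoining degree-preserving variable-swap generators $x_i\,(u/x_j)$ (with $7\le i\le n$, $u\in\mathcal{G}(J)$, $x_j\mid u$) that keep the linear-quotient order intact. The delicate step is to verify that these additions neither destroy the linear resolution of $I$ nor disturb the square: one must show that the degree-$7$ obstruction of $J^2$ survives in $I^2$ while no new obstruction of higher degree is created, so that $\reg\,I^2$ stays exactly $7$. This last verification---controlling $\reg\,I^2$ under the additions needed to reach full support in fixed degree $3$---is where the main work lies.
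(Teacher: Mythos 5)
Your construction has a genuine gap at $d=3$, and that is precisely the case where the whole difficulty of the theorem concentrates. For $4\le d\le n-3$ your product $I=J\cdot P$ (Sturmfels' ideal (\ref{eq:St}) times a squarefree Veronese in the complementary variables) works: products of ideals with linear quotients in disjoint variables do have linear quotients by the colon-splitting argument you give, and $I^2=J^2P^2\cong J^2\otimes_K P^2$ makes regularity additive. But when $d=3$ and $n\ge7$ the factor $P$ degenerates and the product collapses to $J$, which is supported on only six variables, so it is not fully supported as the theorem requires. You see this yourself, and your proposed repair -- adjoining variable-swap generators $x_i(u/x_j)$ -- is left entirely unverified: you explicitly defer both the linear-quotients check for the enlarged ideal and the control of $\reg$ of its square, calling it ``where the main work lies.'' Since $d=3$ is part of the statement for every $n\ge7$, this is a missing case, not a boundary remark; a referee could not accept the proof in this form.

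The paper avoids the problem by never forming a pure product. It sets $H=J+(y_{d-2},\dots,y_{n-6})L$, where $L$ is the squarefree degree-$2$ Veronese on the six Sturmfels variables, and defines $I=(\prod_{i=1}^{d-3}y_i)H$. The added generators $y_j\cdot(\textup{quadric})$ are cubics that sweep up all remaining variables, so full support holds even at $d=3$, where the monomial prefactor is empty. Linear quotients are obtained by listing a linear-quotients order of $(y_{d-2},\dots,y_{n-6})L$ first and then appending Sturmfels' order $v_1,\dots,v_8$; the only computation needed is $(u_1,\dots,u_r):v_i=(y_{d-2},\dots,y_{n-6})$, which is immediate because $v_i\in L$ and the $y$-variables avoid $J$ and $L$. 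Failure of linearity for $I^2$ then follows by restricting $I^2$ to the variables $\{a,\dots,f,y_1,\dots,y_{d-3}\}$ (\cite[Lemma 4.4]{HHZ2}), which recovers a shifted copy of $J^2$. Two further remarks: your observation that the exact equality $\reg I^2=2d+1$ requires the computed value $\reg J^2=7$, and not merely Sturmfels' statement that $J^2$ is not linear, is fair -- the paper's own proof likewise only establishes the failure of linearity, i.e.\ the lower bound $\reg I^2\ge 2d+1$; and your tensor-product additivity argument is arguably cleaner than restriction where it applies. But without a complete treatment of $d=3$, the proof is not done; the paper's ``sum'' term $(y_{d-2},\dots,y_{n-6})L$ is exactly the device your approach is missing.
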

	
	We define the class $$\mathcal{I}_{n,d}^*(K)\ =\ \{I\in\mathcal{I}_{n,d}(K):I\ \textup{has linear powers}\}.$$
	Theorem \ref{ThmD} shows that $\mathcal{I}_{n,d}(K)\ne\mathcal{I}_{n,d}^*(K)$ for any $3\le d\le n{-}3$ and any field $K$.\medskip
	
	By combining Theorems \ref{ThmA}, \ref{ThmB}, \ref{ThmC} and \ref{ThmD}, we are finally able to deliver the complete solution to Questions \ref{Question} and \ref{Question1}.
	
	\begin{TheoremAlph}\label{ThmE}
		The following statements are equivalent.
		\begin{enumerate}
			\item[\textup{(a)}] $\mathcal{I}_{n,d}(K)$ does not depend on $K$.
			\item[\textup{(b)}] $d\in\{0,1,2,n{-}2,n{-}1,n\}$.
			\item[\textup{(c)}] $\mathcal{I}_{n,d}(K)=\mathcal{I}_{n,d}^*(K)$.
		\end{enumerate}
		Furthermore, for any squarefree monomial ideal $I\subset S$ generated in a single degree $d$ with $d\in\{0,1,2,n{-}2,n{-}1,n\}$, the following conditions are equivalent.
		\begin{enumerate}
			\item[\textup{(i)}] $I$ has a linear resolution.
			\item[\textup{(ii)}] $I$ has linear quotients.
			\item[\textup{(iii)}] $I^k$ has linear quotients, for all $k\ge1$.
		\end{enumerate}
	\end{TheoremAlph}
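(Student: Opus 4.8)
The plan is to derive Theorem~\ref{ThmE} as a synthesis of Theorems~\ref{ThmA}--\ref{ThmD} together with the polymatroidal facts recalled in the introduction, organizing everything as a degree-by-degree case analysis. I would begin with the \emph{furthermore} part, since the three equivalences of the main statement will feed off it. The implication (iii)~$\Rightarrow$~(ii) is immediate (take $k=1$), and (ii)~$\Rightarrow$~(i) is the standard fact, recalled in the introduction, that a monomial ideal generated in a single degree with linear quotients has a linear resolution. Thus the only substantive content of the \emph{furthermore} part is (i)~$\Rightarrow$~(iii), and this is where I would split into the six admissible degrees.

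For $d\in\{0,1,n{-}1,n\}$ every squarefree monomial ideal generated in degree $d$ is polymatroidal; since powers of polymatroidal ideals are again polymatroidal by \cite[Theorem~12.6.3]{HHBook} and, by the Bandari--Rahmati-Asghar characterization \cite{BR}, polymatroidal ideals have linear quotients, conditions (i), (ii), (iii) in fact hold \emph{unconditionally} for every such $I$, so their equivalence is automatic. For $d=2$ the equivalence (i)~$\Leftrightarrow$~(iii) is exactly (a)~$\Leftrightarrow$~(b) of Theorem~\ref{ThmA}, and for $d=n{-}2$ it is (a)~$\Leftrightarrow$~(b) of Theorem~\ref{ThmB}; combined with the two easy implications above, this settles the \emph{furthermore} part for all $d\in\{0,1,2,n{-}2,n{-}1,n\}$.

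With this in hand I would close the main cycle by proving (b)~$\Rightarrow$~(a), (b)~$\Rightarrow$~(c) directly and their converses by contraposition. For (b)~$\Rightarrow$~(c): given $I\in\mathcal{I}_{n,d}(K)$, condition (i) holds, so by (i)~$\Rightarrow$~(iii) each $I^k$ has linear quotients and hence a linear resolution, whence $I$ has linear powers and $I\in\mathcal{I}_{n,d}^*(K)$; since $\mathcal{I}_{n,d}^*(K)\subseteq\mathcal{I}_{n,d}(K)$ always holds, equality follows. For (b)~$\Rightarrow$~(a): in degrees $0,1,n{-}1,n$ the class $\mathcal{I}_{n,d}(K)$ consists of \emph{all} squarefree ideals generated in degree $d$ (again by the polymatroidal property), while for $d=2$ and $d=n{-}2$ membership is governed by the purely combinatorial conditions ``$G^c$ chordal'' and ``$c(G)=1$'' from Theorems~\ref{ThmA}(c) and \ref{ThmB}(c); none of these depends on $K$.

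For the two remaining implications I would argue by contraposition using the counterexamples, so here $\lnot$(b) means $3\le d\le n{-}3$. Theorem~\ref{ThmC} produces a fully-supported ideal generated in degree $d$ whose regularity is $d$ or $d+1$ according as $\textup{char}(K)\ne2$ or $\textup{char}(K)=2$, so its membership in $\mathcal{I}_{n,d}(K)$ flips with the characteristic; this yields $\lnot$(b)~$\Rightarrow$~$\lnot$(a). Theorem~\ref{ThmD} produces a fully-supported ideal $I$ generated in degree $d$ with linear quotients (so $I\in\mathcal{I}_{n,d}(K)$) satisfying $\reg\,I^2=2d+1$, so $I^2$ has no linear resolution and $I\notin\mathcal{I}_{n,d}^*(K)$; this yields $\lnot$(b)~$\Rightarrow$~$\lnot$(c). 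I do not expect a genuine mathematical obstacle, as every hard estimate has already been carried out in Theorems~\ref{ThmA}--\ref{ThmD}; the only point requiring care is the bookkeeping, namely verifying that the six exceptional degrees are exhausted by the polymatroidal range $\{0,1,n{-}1,n\}$ together with $d=2$ and $d=n{-}2$, and that the contrapositive direction correctly records the \emph{field-dependence} (for (a)) as opposed to the \emph{failure of linear powers} (for (c)) supplied by the two respective families of counterexamples.
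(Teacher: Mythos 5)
Your proposal is correct and follows essentially the same route as the paper: the paper gives no separate argument for Theorem~\ref{ThmE} beyond the remark that it follows ``by combining Theorems~\ref{ThmA}, \ref{ThmB}, \ref{ThmC} and \ref{ThmD},'' and your degree-by-degree synthesis (polymatroidal degrees $\{0,1,n{-}1,n\}$, Theorem~\ref{ThmA} for $d=2$, Theorem~\ref{ThmB} for $d=n{-}2$, and Theorems~\ref{ThmC}, \ref{ThmD} by contraposition for $3\le d\le n{-}3$) is precisely the intended argument, with the bookkeeping spelled out correctly.
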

	
	In view of the results in this paper, and the existing literature, we are tempted to conjecture that if a (squarefree) monomial ideal $I\subset S$ has linear powers, then $I^k$ has linear quotients for all $k\gg0$.
	
	\section{Dominant monomial orders and linear quotients}\label{sec2}
	
	In this section, we introduce the notions of $x$-dominant and $y$-dominant monomial orders and show that for an equigenerated monomial ideal $I$ if $\mathcal{R}(I)$ satisfies the $x$-condition with respect to either an $x$-dominant or a $y$-dominant monomial order, then all powers of $I$ possess linear quotients. We will give nice applications of these results in this and in the subsequent section, where we treat squarefree monomial ideals generated in degree $n-2$. 
	
	Let $T=K[x_1,\ldots,x_n,y_1,\ldots,y_m]$ be a polynomial ring over the field $K$. We call a monomial order $<$ on $T$ an \emph{$x$-dominant order}, if for any monomials $u,u'\in K[x_1,\ldots,x_n]$ and $v,v'\in K[y_1,\ldots,y_m]$, with $\deg(uv)=\deg(u'v')$, if $u\neq u'$ then the following is satisfied:
	$$uv>u'v'\, \Longleftrightarrow \, u>u'.$$
	
	The following monomial orders are examples of $x$-dominant orders on $T$.
	
	\begin{itemize}
		\item [(a)]  Lexicographical order induced by $x_1>\cdots>x_n>y_1>\cdots>y_m$.
		\item [(b)] Pure lexicographical order induced by $x_1>\cdots>x_n>y_1>\cdots>y_m$.
		\item [(c)] Reverse lexicographical order induced by $y_1>\cdots>y_m>x_1>\cdots>x_n$.
		\item [(d)] The product order of $<_x$ and $<_y$, where  $<_x$ and $<_y$ are given monomial orders on $K[x_1,\ldots,x_n]$ and $K[y_1,\ldots,y_m]$, respectively.
	\end{itemize}\smallskip
	
	Similarly, we call a monomial order $<$ on $T$ a \emph{$y$-dominant order}, if for any monomials $u,u'\in K[x_1,\ldots,x_n]$ and $v,v'\in K[y_1,\ldots,y_m]$, with $\deg(uv)=\deg(u'v')$, if $v\neq v'$ then the following is satisfied:
	$$uv>u'v'\, \Longleftrightarrow \, v>v'.$$
	
	The following monomial orders are examples of $y$-dominant orders on $T$.
	
	\begin{itemize}
		\item [(a)]  Lexicographical order induced by $y_1>\cdots>y_m>x_1>\cdots>x_n$.
		\item [(b)] Pure lexicographical order induced by $y_1>\cdots>y_m>x_1>\cdots>x_n$.
		\item [(c)] Reverse lexicographical order induced by $x_1>\cdots>x_n>y_1>\cdots>y_m$.
		\item [(d)] The product order of $<_y$ and $<_x$, where  $<_y$ and $<_x$ are given monomial orders on $K[y_1,\ldots,y_m]$ and $K[x_1,\ldots,x_n]$, respectively.
	\end{itemize} 
	
	\begin{Theorem}\label{GeneralStatementX}
		Let $I\subset S$ be a monomial ideal generated in a single degree. If  $\mathcal{R}(I)$ satisfies the $x$-condition with respect to an $x$-dominant monomial order, then $I^k$ has linear quotients for all $k\ge1$.    
	\end{Theorem}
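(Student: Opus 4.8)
The plan is to transport the $x$-condition through the defining epimorphism $\varphi:T\to\mathcal{R}(I)$ into an explicit linear-quotient order on each $\mathcal{G}(I^k)$. Write $\mathcal{G}(I)=\{u_1,\dots,u_m\}$, let $d$ be the common degree, and for $\mathbf{a}\in\NN^m$ set $u_{\mathbf a}=\prod_j u_j^{a_j}$. Let $<_x$ be the restriction of the given $x$-dominant order $<$ to the monomials of $S$, which is again a monomial order. \emph{First} I would set up the dictionary between generators and standard monomials. The monomials of $T$ of bidegree $(dk,k)$ are exactly the $\mathbf{y}^{\mathbf a}$ with $|\mathbf a|=k$ (the $x$-degree must vanish), and since for any fixed bidegree the standard monomials, i.e.\ those not in $\ini_<(J)$, form a $K$-basis of that piece of $T/J\cong\mathcal{R}(I)$, the assignment $\mathbf{y}^{\mathbf a}\mapsto u_{\mathbf a}$ restricts to a bijection between the standard monomials of bidegree $(dk,k)$ and $\mathcal{G}(I^k)$: their images form a basis of $(I^k)_{dk}t^k$, hence are distinct monomials, and a dimension count (using that every degree-$dk$ monomial of $I^k$ is a minimal generator) gives surjectivity. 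Thus each $w\in\mathcal{G}(I^k)$ has a unique standard representative $\mathbf{y}^{\mathbf{a}(w)}$ with $u_{\mathbf{a}(w)}=w$, and I order $\mathcal{G}(I^k)$ by decreasing $<_x$, declaring $w$ to precede $w'$ whenever $w>_xw'$.

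\emph{Second}, by the standard criterion it suffices to show that for each $w$ and each preceding generator $w'$ (so $w'>_xw$), the generator $w'/\gcd(w,w')$ of $(w'):(w)$ is divisible by a variable lying in the colon ideal $L:=(\{w''\in\mathcal{G}(I^k):w''>_xw\}):(w)$. Put $p=w'/\gcd(w,w')$ and $q=w/\gcd(w,w')$; these are coprime of equal positive degree, and since $<_x$ is multiplicative, $w'>_xw$ forces $p>_xq$. The binomial $f=p\,\mathbf{y}^{\mathbf{a}(w)}-q\,\mathbf{y}^{\mathbf{a}(w')}$ lies in $J$, as both terms map under $\varphi$ to $(ww'/\gcd(w,w'))\,t^k$. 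Its two terms have equal total degree, so the defining property of an $x$-dominant order together with $p>_xq$ gives $\ini_<(f)=p\,\mathbf{y}^{\mathbf{a}(w)}$. Hence $p\,\mathbf{y}^{\mathbf{a}(w)}\in\ini_<(J)$ is divisible by a minimal generator $uv$ with $\deg_x u\le1$; the case $\deg_x u=0$ is impossible, since then $\mathbf{y}^{\mathbf{a}(w)}$ itself would lie in $\ini_<(J)$, contradicting standardness. So the generator is $x_i\mathbf{y}^{\mathbf b}$ with $x_i\mid p$ and $\mathbf{y}^{\mathbf b}\mid\mathbf{y}^{\mathbf{a}(w)}$.

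\emph{Third} I would produce the witness. Since $x_i\mathbf{y}^{\mathbf b}\in\ini_<(J)$ and $J$ is toric, there is a binomial in $J$ with leading term $x_i\mathbf{y}^{\mathbf b}$; by bidegree its trailing term is $x_s\mathbf{y}^{\mathbf{b}'}$ for a single variable $x_s$, so $x_iu_{\mathbf b}=x_su_{\mathbf{b}'}$ in $S$ with $|\mathbf b|=|\mathbf{b}'|$. Setting $\mathbf{c}=\mathbf{a}(w)-\mathbf{b}\ge0$ and $w''=u_{\mathbf{b}'+\mathbf c}=x_iw/x_s\in\mathcal{G}(I^k)$, we have $x_i\mid p$ and $w''\mid x_iw$, so $x_i\in(w''):(w)$. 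The decisive point is that $w''$ precedes $w$: here $x_i\ne x_s$ (otherwise $\mathbf{y}^{\mathbf b}-\mathbf{y}^{\mathbf{b}'}\in J$, forcing the divisor $\mathbf{y}^{\mathbf b}$ of the standard monomial $\mathbf{y}^{\mathbf{a}(w)}$ to be non-standard), and the terms $x_i\mathbf{y}^{\mathbf b}>x_s\mathbf{y}^{\mathbf{b}'}$ have equal total degree, so $x$-dominance yields $x_i>_xx_s$; multiplicativity then gives $w''=x_iw/x_s>_xw$. Consequently $x_i\in L$ and $x_i\mid p$, so every such $p$ is divisible by a variable of $L$; hence $L$ is generated by variables. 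As this holds for every $w\in\mathcal{G}(I^k)$ and every $k\ge1$, the order chosen above is a linear-quotient order, proving the claim.

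I expect the main obstacle to be the coordinated use of the $x$-dominant hypothesis at its two appearances with respect to the single order $<_x$. The first use converts the leading-term comparison in $f$ into the variable inequality $p>_xq$, so that the reduction attaches to $w$ rather than to $w'$; the second converts $x_i\mathbf{y}^{\mathbf b}>x_s\mathbf{y}^{\mathbf{b}'}$ into $x_i>_xx_s$, which is exactly what forces the witness $w''$ to be $<_x$-larger than $w$ and hence to precede it. Making these two conversions compatible is the crux; the supporting facts (the standard-monomial bijection, binomiality of a toric Gröbner basis, and that divisors of standard monomials are standard) are routine.
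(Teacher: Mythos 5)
Your proof is correct and follows essentially the same route as the paper's: you order $\mathcal{G}(I^k)$ by the restriction of the $x$-dominant order, attach to each generator a distinguished $y$-monomial representative, and use $x$-dominance twice --- once to identify the leading term of the colon binomial so that the $x$-condition applies, and once to convert the Gr\"obner binomial $x_i\mathbf{y}^{\mathbf{b}}-x_s\mathbf{y}^{\mathbf{b}'}$ into the witness $x_iw/x_s\in\mathcal{G}(I^k)$ that precedes $w$. The only immaterial differences are that you characterize the representative as the unique standard monomial in the fiber rather than as the $<$-smallest one, and you rule out the degenerate cases ($x$-part trivial, or $x_i=x_s$) by standardness and primeness of the toric ideal rather than by fiber-minimality and coprimality of the terms of the Gr\"obner binomial.
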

	\begin{proof}
		Let $\mathcal{G}(I)=\{u_1,\ldots,u_m\}$, and let $T=K[x_1,\ldots,x_n,y_1,\dots,y_m]$ be a polynomial ring. Consider the $S$-algebra homomorphism $\varphi:T\rightarrow\mathcal{R}(I)$ defined by $\varphi(y_i)=u_it$ for $i=1,\dots,m$    and set $J=\Ker\,\varphi$. By assumption, there exists an $x$-dominant monomial order $<_d$ on $T$ such that $\mathcal{R}(I)$ satisfies the $x$-condition with respect to $<_d$. Fix an integer $k\geq 1$ and assume that $\mathcal{G}(I^k)=\{f_1,\ldots,f_r\}$ such that 
		\begin{equation}\label{OrderOnf_i}
			f_1>_d\cdots>_d f_r. 
		\end{equation}
		For any $1\leq \ell\leq r$, we set $f_{\ell}^*$ to be the standard    presentation of $f_{\ell}$ with respect to $<_d$. That is $f_{\ell}^*$ is the smallest monomial in $K[y_1,\dots,y_m]$ with respect to $<_d$ such that $\varphi(f_{\ell}^*)=f_{\ell}t^k$. Such a monomial $f_{\ell}^*$ exists, since the monomials $y_{i_1}\cdots y_{i_k}$ satisfying $\varphi(y_{i_1}\cdots y_{i_k})=f_{\ell}t^k$ are all of degree $k$ and the number of such monomials is finite.
		
		We prove that for any $2\leq j\leq r$, the ideal $L_j=(f_1,\ldots,f_{j-1}):f_j$ is generated by variables. Let $u\in L_j$ be a monomial. We need to show that there exists a variable $x_t\in L_j$ which divides $u$. Since all $f_i$'s are minimal generator of $I^k$, we have $u\neq 1$. From $u\in L_j$ we have $uf_j=vf_i$ for some monomial $v\in S$ with $v\neq u$ and an integer $i<j$. This gives the relation $h=uf_j^*-vf_i^*\in J$. 
		Since $uf_j=vf_i$ and $f_i>_d f_j$, we obtain $u=(vf_i)/f_j>_d v$. This together with the facts that $<_d$ is an $x$-dominant order and $\deg(uf_j^*)=\deg(vf_i^*)$ implies that $\ini_{<_d}(h)=uf_j^*$. Since $\mathcal{R}(I)$ satisfies the $x$-condition with respect to $<_d$, there exists a monomial $w\in K[x_1,\ldots,x_n]$ with $\deg(w)\leq 1$ and a monomial $g\in K[y_1,\ldots,y_m]$ such that $w\mid u$ and $g\mid f_j^*$ and $wg\in \ini_{<_d}(J)$ is a minimal generator. Let $h'=wg-w'g'\in J$ be a binomial with $\ini_{<_d}(h')=wg$, where $w'\in K[x_1,\ldots,x_n]$  and $g'\in K[y_1,\ldots,y_m]$ are monomials. Clearly, $\gcd(w,w')=1$ and $\deg(g)=\deg(g')$. Then comparing the degrees of monomials in the equality $w\varphi(g)=\varphi(wg)=\varphi(w'g')=w'\varphi(g')$ and using the assumption that $I$ is generated in one degree, we obtain $\deg(w)=\deg(w')$.
		
		First we show that $w\neq 1$. Suppose on the contrary that $w=1$. Since $\deg(w)=\deg(w')$, we get $w'=1$. Hence, $h'=g-g'\in J$ with $\ini_{<_d}(h')=g$. Thus  $g>_d g'$ which implies that $f_j^*=(f_j^*/g)g>_d (f_j^*/g)g'$. Moreover, $$f_j^*-(f_j^*/g)g'=(f_j^*/g)(g-g')\in J.$$ So
		$\varphi((f_j^*/g)g')=\varphi(f_j^*)=f_jt^k$. This together with $(f_j^*/g)g'<_d f_j^*$ contradicts the fact that $f_j^*$ is the standard presentation of $f_j$ with respect to $<_d$. Thus $w\neq 1$.
		
		Since $\deg(w)\leq 1$ we have $w=x_s$ for some $s$, and $w'=x_t$ for some $t$. Since $\gcd(w.w')=1$ we have $s\neq t$. From $\ini_{<_d}(h')=x_sg$, and that $<_d$ is an $x$-dominant order, we obtain $x_s>_d x_t$. Moreover, 
		\begin{equation}\label{RelationinJ}
			x_sf_j^*-x_t(f_j^*/g)g'=(f_j^*/g)[x_sg-x_tg']=(f_j^*/g) h'\in J. 
		\end{equation}  Since $I$ is generated in one degree and $(f_j^*/g)g'$ is of degree $k$, there exists an integer $1\leq p\leq r$ such that $\varphi((f_j^*/g)g')=f_pt^k=\varphi(f_p^*)$. Then from $(f_j^*/g)g'-f_p^*\in J$ and (\ref{RelationinJ}) we get the relation $x_sf_j^*-x_tf_p^*\in J$.
		This means that $x_sf_j=x_tf_p$. From $x_s>_d x_t$ we conclude that $f_p=(x_sf_j)/x_t>_d f_j$. By (\ref{OrderOnf_i}) this means that $p<j$ and $f_p\in (f_1,\ldots,f_{j-1})$. Thus the equation $x_sf_j=x_tf_p$ implies that $x_s\in L_j$, as desired.
	\end{proof}
	
	The following result is the analogue of Theorem \ref{GeneralStatementX} with respect to a $y$-dominant monomial order. Its proof follows the same strategy as that employed in the proof of \cite[Theorem 2.5]{HHDepth}. It should be mentioned that in \cite[Theorem 2.5]{HHDepth}, by the $x$-condition, the authors mean the $x$-condition with respect to the special product order given before the statement of their theorem.
	
	\begin{Theorem}\label{GeneralStatementY}
		Let $I\subset S$ be a monomial ideal generated in one degree. If  $\mathcal{R}(I)$ satisfies the $x$-condition with respect to a $y$-dominant monomial order, then $I^k$ has linear quotients for all $k\ge1$.    
	\end{Theorem}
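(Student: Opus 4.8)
The plan is to run the argument of Theorem \ref{GeneralStatementX} in parallel, following the strategy of \cite[Theorem 2.5]{HHDepth}, but with the indexing of $\mathcal{G}(I^k)$ and the decisive initial-term computations rearranged so that the $y$-variables, rather than the $x$-variables, govern the comparisons. I would keep the entire setup: $\mathcal{G}(I)=\{u_1,\dots,u_m\}$, the map $\varphi\colon T\to\mathcal{R}(I)$, $J=\Ker\,\varphi$, a $y$-dominant order $<_d$ realizing the $x$-condition, and, for fixed $k\ge1$ and each minimal generator $f$ of $I^k$, its standard presentation $f^*$ (the $<_d$-smallest $y$-monomial with $\varphi(f^*)=ft^k$). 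The one structural change is the indexing of $\mathcal{G}(I^k)=\{f_1,\dots,f_r\}$: instead of ordering the generators decreasingly as in Theorem \ref{GeneralStatementX}, I would order them so that their presentations \emph{increase}, $f_1^*<_d\cdots<_d f_r^*$. This is exactly what puts the $y$-factors in charge of the initial terms below.

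With this in place, I would show as before that $L_j=(f_1,\dots,f_{j-1}):f_j$ is generated by variables. For a monomial $u\in L_j$, write $uf_j=vf_i$ with $i<j$, giving $h=uf_j^*-vf_i^*\in J$. Since $I$ is equigenerated we get $\deg u=\deg v$, so the two terms of $h$ have equal total degree; as the presentations are distinct and $f_i^*<_d f_j^*$, $y$-dominance forces $\ini_{<_d}(h)=uf_j^*$. (This is the place where $x$-dominance was invoked in Theorem \ref{GeneralStatementX}; now the role is played by the $y$-parts.) The $x$-condition then yields a minimal generator $wg$ of $\ini_{<_d}(J)$ with $w\in K[x_1,\dots,x_n]$ of degree $\le1$, $g\in K[y_1,\dots,y_m]$, $w\mid u$ and $g\mid f_j^*$, together with a binomial $h'=wg-w'g'\in J$ such that $\ini_{<_d}(h')=wg$, $\gcd(w,w')=1$ and $\deg w=\deg w'$. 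The proof that $w\neq1$ carries over verbatim from Theorem \ref{GeneralStatementX} (if $w=1$ then $w'=1$ and $g-g'\in J$ with $g>_d g'$ violates the minimality of $f_j^*$), so $w=x_s$ and $w'=x_t$ with $s\neq t$.

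The decisive divergence comes at the last step. Where Theorem \ref{GeneralStatementX} uses $x$-dominance to deduce $x_s>_d x_t$ and then compares $f_p,f_j$ directly in the $x$-variables, a $y$-dominant order gives no information about $x_s$ versus $x_t$, so I would transfer the comparison onto the $y$-parts. First, $g\neq g'$: otherwise $h'=(x_s-x_t)g$ would be a nonzero element of the prime ideal $J$ mapping to $(x_s-x_t)\varphi(g)\neq0$, a contradiction. Hence, from $\ini_{<_d}(h')=x_sg$ and $y$-dominance applied to the equal-degree monomials $x_sg,x_tg'$, we obtain $g>_d g'$. Multiplying $h'$ by $f_j^*/g$ produces $x_sf_j^*-x_t(f_j^*/g)g'\in J$; since $I^k$ is generated in the single degree $dk$ and $(f_j^*/g)g'$ has $y$-degree $k$, there is a minimal generator $f_p\in\mathcal{G}(I^k)$ with $\varphi((f_j^*/g)g')=f_pt^k$, whence $x_sf_j=x_tf_p$. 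Finally, $f_p^*\le_d(f_j^*/g)g'$ by minimality of the presentation, and $(f_j^*/g)g'<_d(f_j^*/g)g=f_j^*$ by multiplicativity of $<_d$ together with $g'<_d g$; hence $f_p^*<_d f_j^*$ and therefore $p<j$ by our indexing. Thus $x_sf_j=x_tf_p\in(f_1,\dots,f_{j-1})$, so the variable $x_s\in L_j$ divides $u$, completing the argument. I expect this final transfer to be the main obstacle: it is what forces both the auxiliary fact $g\neq g'$ (through primeness of $J$) and the choice to index $\mathcal{G}(I^k)$ by increasing presentations, and it is the only genuinely new ingredient beyond the proof of Theorem \ref{GeneralStatementX}.
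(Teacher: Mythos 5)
Your proof is correct and takes essentially the same route as the paper's: you index $\mathcal{G}(I^k)$ by increasing standard presentations $f_1^*<_d\cdots<_d f_r^*$, use $y$-dominance to force $\ini_{<_d}(h)=uf_j^*$, run the same $w\ne 1$ argument, and transfer the final comparison to the $y$-parts via $g>_d g'$ to get $f_p^*\le_d (f_j^*/g)g'<_d f_j^*$ and hence $p<j$. The only (welcome) difference is that you explicitly check $g\ne g'$ before invoking $y$-dominance, using that $h'\in J=\Ker\,\varphi$ while $(x_s-x_t)\varphi(g)\ne 0$; the paper asserts $g>_d g'$ directly from $\ini_{<_d}(h')=wg$, leaving this small verification implicit.
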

	\begin{proof}
		Let $\mathcal{G}(I)=\{u_1,\ldots,u_m\}$, and let $T=K[x_1,\ldots,x_n,y_1,\dots,y_m]$ be a polynomial ring. Consider the $S$-algebra homomorphism $\varphi:T\rightarrow\mathcal{R}(I)$ defined by $\varphi(y_i)=u_it$ for $i=1,\dots,m$    and set $J=\Ker\,\varphi$.  Let $<_d$ be a  $y$-dominant monomial order such that $\mathcal{R}(I)$ satisfies the $x$-condition with respect to $<_d$. 
		Fix an integer $k\geq 1$ and assume that $\mathcal{G}(I^k)=\{f_1,\ldots,f_r\}$. For any $1\leq \ell\leq r$, we set $f_{\ell}^*$ to be the standard presentation of $f_{\ell}$ with respect to $<_d$ in $T$. Without loss of generality we may assume that
		\begin{equation}\label{OrderOnf*}
			f_{1}^*<_d \cdots<_d f_{r}^*.
		\end{equation}
		We show that for any $2\leq j\leq r$, the ideal $L_j=(f_1,\ldots,f_{j-1}):f_j$ is generated by variables. Let $u\in L_j$ be a monomial.   Since $f_j$ is a minimal generator of $I^k$, we have $u\neq 1$. From $u\in L_j$ we have $uf_j=vf_i$ for some monomial $v\in S$ with $v\neq u$ and an integer $i<j$. This gives the relation $h=uf_j^*-vf_i^*\in J$. 
		Note that $\ini_{<_d}(h)=uf_j^*$, since $f_j^*>_d f_i^*$, $\deg(uf_j^*)=\deg(vf_i^*)$ and $<_d$ is a $y$-dominant order. 
		Since $\mathcal{R}(I)$ satisfies the $x$-condition with respect to $<_d$, there exists a monomial $w\in K[x_1,\ldots,x_n]$ with $\deg(w)\leq 1$ and a monomial $g\in K[y_1,\ldots,y_m]$ such that $w\mid u$ and $g\mid f_j^*$ and $wg\in \ini_{<_d}(J)$ is a minimal generator.
		Let $h'=wg-w'g'\in J$ be a binomial with $\ini_{<_d}(h')=wg$, where $w'\in K[x_1,\ldots,x_n]$  and $g'\in K[y_1,\ldots,y_m]$.
		Comparing the degrees of monomials in the equality $w\varphi(g)=\varphi(wg)=\varphi(w'g')=w'\varphi(g')$ and using the assumption that $I$ is generated in one degree, we obtain $\deg(w)=\deg(w')$. Moreover, from $\ini_{<_d}(h')=wg$ we obtain $g>_d g'$. 
		
		By contradiction assume that $w=1$. Then $w'=1$ and $h'=g-g'\in J$ with $\ini_{<_d}(h')=g$. Thus  $f_j^*=(f_j^*/g)g>_d (f_j^*/g)g'$ with $$f_j^*-(f_j^*/g)g'=(f_j^*/g)(g-g')\in J.$$ These give a contradiction to $f_j^*$ being the standard presentation of $f_j$ with respect to $<_d$. 
		Thus $w\neq 1$ and we have $w=x_s$ for some $s$, and $w'=x_t$ for some $t$. Since $\gcd(w,w')=1$, we have $s\neq t$. Also
		\begin{equation}\label{Relation1}
			x_sf_j^*-x_t(f_j^*/g)g'=(f_j^*/g)[x_sg-x_tg']=(f_j^*/g) h'\in J. 
		\end{equation}  
		Since $I$ is generated in one degree, there exists an integer $p$ such that $\varphi((f_j^*/g)g')=f_pt^k=\varphi(f_p^*)$.
		Because $f_p^*$ is the standard presentation of $f_p$, we have 
		$f_p^*\leq_d (f_j^*/g)g'<_d f_j^*$. Hence, by (\ref{OrderOnf*}) we get  $p<j$. 
		Also from $(f_j^*/g)g'-f_p^*\in J$ and (\ref{Relation1}) and we get the relation $x_sf_j^*-x_tf_p^*\in J$. So $x_sf_j=x_tf_p$ with $p<j$, which implies that $x_s\in L_j$. 
	\end{proof}

	Monomial ideals with the $\ell$-exchange
	property were first considered in the work of Herzog, Hibi and Vladoiu \cite{HHV2005} in the study of ideals of fiber type. For such ideals it was shown in \cite[Theorem 5.1]{HHV2005} (see also \cite[Theorem 6.24]{EH}) that if the ideal $I$ is generated in one degree, then  $\mathcal{R}(I)$ satisfies the $x$-condition with respect to the product order obtained by the product of $<_{\lex}$ and $<$, where $<_{\lex}$ is the lexicographical order on $K[x_1,\ldots,x_n]$ induced by $x_1>\cdots>x_n$ and $<$ is an arbitrary monomial order on $K[y_1,\ldots,y_m]$. Since such order is an $x$-dominant order, Theorem \ref{GeneralStatementX} implies 
	
	\begin{Corollary}\label{Application1}
		Let $I$ be a monomial ideal generated in one degree with the $\ell$-exchange
		property. Then $I^k$ has linear quotients with respect to the lexicographical order induced by $x_1>\cdots>x_n$ for all $k\geq 1$. In particular, this is the case if $I$ is one of the following ideals. 
		\begin{enumerate}
			\item The facet ideal $I(\Gamma^{[t]})$, where $\Gamma$ is a $d$-flag sortable simplicial complex.
			\item A sortable weakly polymatroidal ideal.
			\item A generalized Hibi ideal.
		\end{enumerate} 
	\end{Corollary}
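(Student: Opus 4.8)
The plan is to obtain the Corollary as a direct packaging of \cite[Theorem 5.1]{HHV2005} with Theorem \ref{GeneralStatementX}, so that the only genuine care is in pinning down the explicit linear-quotients order. First I would recall that, since $I$ has the $\ell$-exchange property and is generated in one degree, \cite[Theorem 5.1]{HHV2005} guarantees that $\mathcal{R}(I)$ satisfies the $x$-condition with respect to the product order $<_d$ of the lexicographic order $<_{\lex}$ on $K[x_1,\ldots,x_n]$ induced by $x_1>\cdots>x_n$ and an arbitrary monomial order $<$ on $K[y_1,\ldots,y_m]$. As observed in item (d) of the list of examples preceding Theorem \ref{GeneralStatementX}, this product order is $x$-dominant. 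Hence Theorem \ref{GeneralStatementX} applies and immediately yields that $I^k$ has linear quotients for every $k\ge1$.

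Next I would identify the order realizing these linear quotients. Inspecting the proof of Theorem \ref{GeneralStatementX}, the generators $\mathcal{G}(I^k)=\{f_1,\ldots,f_r\}$ are listed by $f_1>_d\cdots>_d f_r$, that is, by the ambient $x$-dominant order $<_d$. Each $f_\ell$ lies in $S$, hence is a pure $x$-monomial in $T$ with trivial $y$-part, and all the $f_\ell$ share the common degree $dk$. By the definition of the product order $<_d$, which compares $x$-parts first via $<_{\lex}$, its restriction to monomials of $S$ of equal degree coincides with $<_{\lex}$. Therefore the order on $\mathcal{G}(I^k)$ giving linear quotients is exactly the lexicographic order induced by $x_1>\cdots>x_n$, which is the precise assertion of the first part.

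For the three particular cases the strategy reduces to verifying that each listed ideal enjoys the $\ell$-exchange property, after which the first assertion applies verbatim. All three classes are known to satisfy it: the facet ideals $I(\Gamma^{[t]})$ of $d$-flag sortable complexes, the sortable weakly polymatroidal ideals, and the generalized Hibi ideals each possess the $\ell$-exchange property by the results of \cite{HHV2005} and the subsequent literature, where in each case one unwinds the relevant sortability (or weak-polymatroidal) condition on $\mathcal{G}(I)$ and matches it against the exchange condition defining $\ell$-exchange. I expect this verification, rather than the deduction carried out in the first two paragraphs, to be the main obstacle; it is essentially bookkeeping, but it does require translating the combinatorial definitions of flag sortability and of sortable weakly polymatroidal ideals into the exact form needed by \cite[Theorem 5.1]{HHV2005}.
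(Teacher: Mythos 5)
Your proposal is correct and takes essentially the same route as the paper: invoke \cite[Theorem 5.1]{HHV2005} for the $x$-condition with respect to the product order, observe that this order is $x$-dominant (example (d)), apply Theorem \ref{GeneralStatementX}, and note that the restriction of the product order to equal-degree monomials of $S$ is exactly $<_{\lex}$, which is the (implicit) justification of the stated quotient order. The only difference is in the three special cases, where the paper does not redo any verification but simply cites the specific results establishing the $\ell$-exchange property, namely \cite[Proposition 1.4]{FM2} for (1), \cite[Proposition 6.1]{EHM} for (2), and \cite[Theorem 2.4, Theorem 5.3, Proposition 6.1]{EHM} for (3), so the ``bookkeeping'' you anticipate is already in the literature.
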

	\begin{proof}
		The ideals in (1), (2) and (3) satisfy the $\ell$-exchange
		property by \cite[Proposition 1.4]{FM2} (see also \cite[Corollary 1.6]{FM2}), \cite[Proposition 6.1]{EHM} and \cite[Theorem 2.4, Theorem 5.3, Proposition 6.1]{EHM}, where the linear resolution property for all powers of these ideals were shown.
	\end{proof}
	
	\section{Proof of Theorem \ref{ThmB}}
	
	The aim in this section is to prove Theorem \ref{ThmB}.
	First we recall some concepts which will be used in the next result. Let $G$ be a finite simple graph. A {\em walk} of length $r$ in $G$ is a subgraph $W$ of $G$ with $$E(W)\ =\ \{\{x_1,x_2\},\{x_2,x_3\},\ldots,\{x_r,x_{r+1}\}\},$$ where $x_1,\ldots,x_{r+1}$ are vertices in $G$. We denote this walk by $W:x_1,\ldots,x_{r+1}$.
	The walk $W$ is called a {\em closed walk} if $x_1=x_{r+1}$. An {\em even closed walk} is a closed walk of even length.
	For an even closed walk $W:x_1,\ldots,x_{2k},x_1$, a {\em subwalk} of $W$ is an even closed walk $W': z_1,\ldots,z_{2t},z_1$, where for each $1\leq \ell\leq t$, there exist integers $1\leq \ell',\ell''\leq k$
	such that $\{z_{2\ell-1},z_{2\ell}\}=\{x_{2\ell'-1},x_{2\ell'}\}$ and $\{z_{2\ell},z_{2\ell+1}\}=\{x_{2\ell''},x_{2\ell''+1}\}$.  
	An even closed walk $W$ is called {\em primitive}, if the only subwalk of $W$ is $W$ itself. 
	
	Let $J\subset S$ be a binomial ideal. A nonzero binomial $h=u-v\in J$ is called {\em a primitive binomial} in $J$, if there is no nonzero binomial $h'=u'-v'\in J$ with $h'\neq h$ such that $u'\mid u$ and $v'\mid v$.
	
	\begin{Theorem}\label{LinearPowers}
		Let $I=I_c(G)\subset S$, where $G$ is a connected graph. Then
		\begin{enumerate}
			\item [(a)] The Rees algebra $\mathcal{R}(I)$ satisfies the $x$-condition with respect to some lexicographic monomial order which is $x$-dominant. 
			
			\item [(b)] $I^k$ has linear quotients for all $k\ge1$. 
		\end{enumerate}
	\end{Theorem}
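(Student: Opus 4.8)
The plan is to concentrate all the work on part (a) and then read off (b) for free. Indeed, the lexicographic order on $T=K[x_1,\ldots,x_n,y_1,\ldots,y_m]$ induced by $x_1>\cdots>x_n>y_1>\cdots>y_m$ is $x$-dominant (it is example (a) in the list of $x$-dominant orders), so once I know that $\mathcal{R}(I)$ satisfies the $x$-condition with respect to this order, Theorem \ref{GeneralStatementX} immediately yields that $I^k$ has linear quotients for all $k\ge1$. So I would set up $\mathcal{G}(I)=\{u_e:e\in E(G)\}$ with $u_e={\bf x}_{[n]}/{\bf x}_e$, identify each $y$-variable with an edge of $G$, and take $<$ to be the lexicographic order above. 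The goal becomes: \emph{every minimal monomial generator of $\ini_<(J)$ has $x$-degree at most $1$}, which is precisely the hypothesis of Theorem \ref{Thm:regx}(c).

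Since $J$ is a toric ideal, every element of the reduced Gröbner basis of $J$ is a primitive binomial, so each minimal generator of $\ini_<(J)$ is the initial monomial of a primitive binomial $h=x^av-x^{a'}v'\in J$ with $v,v'\in K[y_1,\ldots,y_m]$ and with the two terms coprime. Because $<$ is $x$-dominant, $\ini_<(h)=x^av$ where $x^a>x^{a'}$ in the lexicographic order on the $x$-variables; and if $a=a'$ then coprimality forces $x^a=x^{a'}=1$, so $h$ is a pure $y$-binomial of $x$-degree $0$. The membership $x^a\prod_\ell u_{e_\ell}=x^{a'}\prod_\ell u_{f_\ell}$, after substituting $u_e={\bf x}_{[n]}/{\bf x}_e$, becomes the vertex-multiset identity $x^a\prod_\ell{\bf x}_{f_\ell}=x^{a'}\prod_\ell{\bf x}_{e_\ell}$; this is exactly the even-walk combinatorics set up in this section, and it is the bridge that lets me read the edges $e_\ell$ dividing $v$ together with their incidences at each vertex. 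The pure $y$-binomials correspond to even closed walks and cause no trouble, so the whole issue is the case $\deg x^a\ge2$.

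The main step is then to show that such a monomial $x^av$ is never a minimal generator, by producing a \emph{cherry} binomial whose initial term properly divides it. I would first record that whenever two distinct edges $e=\{p,s\}$ and $f=\{q,s\}$ meet at a vertex $s$ one has $x_pu_e=x_qu_f$, hence $x_py_e-x_qy_f\in J$; consequently $x_py_e\in\ini_<(J)$ exactly when the other endpoint $s$ of $e$ has a neighbour larger than $p$. Now set $i=\min(\supp(x^a)\cup\supp(x^{a'}))$; coprimality gives $i\in\supp(x^a)$, and comparing the exponent of $x_i$ in the multiset identity forces some edge $e_\ell=\{i,w\}$ with $y_{e_\ell}\mid v$. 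If $w$ has a neighbour $j>i$, the cherry monomial $x_iy_{e_\ell}\in\ini_<(J)$ divides $x^av$ and we are finished. The delicate remaining case is when every such $w$ is \emph{blocked} — for instance a leaf attached to $i$, or a vertex all of whose neighbours are smaller than $i$ — and here I would pass to a second variable dividing $x^a$ and invoke connectivity of $G$ to locate the auxiliary edge realizing a cherry at the appropriate shared vertex.

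The hard part, and the only place the hypothesis that $G$ is connected is used, is exactly this last configuration: ruling out a ``stuck'' primitive binomial in which every candidate cherry is blocked. Connectivity supplies the auxiliary edge needed to unblock one variable of $x^a$; by contrast, for disconnected $G$ (say two disjoint edges $e,f$) the binomial ${\bf x}_e\,y_f-{\bf x}_f\,y_e$ genuinely survives in the reduced Gröbner basis with $x$-degree $2$, which is precisely the obstruction responsible for the failure of the linear resolution when $c(G)>1$ in Theorem \ref{ThmB}. Once this combinatorial lemma is in place, every minimal generator of $\ini_<(J)$ has $x$-degree $\le1$, so $\mathcal{R}(I)$ satisfies the $x$-condition with respect to the $x$-dominant lexicographic order $<$; this proves (a), and (b) then follows from Theorem \ref{GeneralStatementX}.
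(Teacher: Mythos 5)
Your overall architecture is sound and matches the paper's: reduce (b) to (a) via Theorem \ref{GeneralStatementX}, pass to primitive binomials of the toric ideal $J$, and exploit ``cherry'' relations $x_py_e-x_qy_f\in J$ coming from edges $e=\{p,s\}$, $f=\{q,s\}$ meeting at a vertex. But the step you defer -- the ``blocked'' configuration -- is not a technical remainder; it is the entire content of the theorem, and the plan you sketch for it (``pass to a second variable dividing $x^a$ and invoke connectivity of $G$'') cannot work, because you have fixed the lexicographic order attached to the \emph{given} labeling of $V(G)$, while the theorem only asserts the $x$-condition for \emph{some} lex order. Bare connectivity does not make your chosen order work. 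Concretely, let $G$ be the path with edge set $\{\{1,3\},\{1,2\},\{2,4\}\}$, so $u_{\{1,3\}}=x_2x_4$, $u_{\{1,2\}}=x_3x_4$, $u_{\{2,4\}}=x_1x_3$. Then
$$
h\ =\ x_1x_3\,y_{\{1,3\}}-x_2x_4\,y_{\{2,4\}}\ \in\ J,
$$
and with respect to lex induced by $x_1>x_2>x_3>x_4>y_{\{1,3\}}>y_{\{1,2\}}>y_{\{2,4\}}$ its initial term $x_1x_3\,y_{\{1,3\}}$ is a \emph{minimal} generator of $\ini_<(J)$ of $x$-degree $2$. Indeed, recall that a monomial lies in $\ini_<(J)$ if and only if it is not the $<$-smallest element of its $\varphi$-fiber: the fiber of $x_1x_2x_3x_4t$ is $\{x_1x_2y_{\{1,2\}},\,x_1x_3y_{\{1,3\}},\,x_2x_4y_{\{2,4\}}\}$, so $x_1x_3y_{\{1,3\}}\in\ini_<(J)$, while its divisors $y_{\{1,3\}}$ and $x_1y_{\{1,3\}}$ have singleton fibers, $x_3y_{\{1,3\}}$ is the smallest element of its fiber $\{x_3y_{\{1,3\}},\,x_2y_{\{1,2\}}\}$, and pure $x$-monomials never lie in $\ini_<(J)$. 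This is exactly your blocked situation (vertex $3$ is a leaf whose only neighbour is $1$, and the other neighbour $2$ of vertex $1$ is smaller than $3$), and no second variable or auxiliary edge rescues it: the $x$-condition genuinely fails for your order even though $G$ is a connected path.

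What the paper does, and what is missing from your sketch, is twofold. First, as a global preprocessing step it \emph{relabels} the vertices along a spanning tree, peeling off leaves, so that $G\setminus\{1,\dots,i\}$ is connected for every $i$; this is the only place connectivity is used (in the example above, relabeling the path as $1-2-3-4$ restores the $x$-condition). Second, in the hard case the paper does not use a cherry at all: it transfers the primitive binomial $h\in J$ to a primitive binomial of the toric ideal of $\mathcal{R}(I(G))$ (the generators swap sides under $u_e={\bf x}_{[n]}/{\bf x}_e$), views that Rees algebra as the edge ring of the cone $G^*$, so that $h$ corresponds to a primitive even closed walk; it then takes the full segment $n+1,q_1,q_2,\dots,q_d,n+1$ of this walk between two consecutive visits to the apex and closes it off with an edge $\{q_d,\ell\}$, $\ell>q_d$, supplied by connectedness of the tail $G[\{q_d,\dots,n\}]$ (or of $G[\{q_1+1,\dots,n\}]$ when $q_d=n$). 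The resulting binomial $x_{q_1}y_{s_1}y_{s_3}\cdots y_{s_{d-1}}-x_\ell y_{s_2}y_{s_4}\cdots y_b$ can have arbitrarily large $y$-degree, and it is precisely the alternating structure of the walk segment that guarantees its $y$-part divides the $y$-part of the target monomial, which a single quadratic cherry cannot achieve. So you have located the difficulty correctly, but the tools you commit to (fixed labeling, single-edge cherries, bare connectivity) are insufficient to close it; the spanning-tree relabeling and the walk-segment relations are the missing ideas.
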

	\begin{proof}
		(a) We claim that there is a labeling $1,\ldots,n$ on the vertex set of $G$ such that $G_i=G\setminus\{1,\ldots,i\}$ is a connected graph. Consider a spanning tree $T$ of $G$. Then the desired labeling is obtained by labeling a leaf of $T$ by $1$, and for each $1\leq i<n-1$ labeling a leaf of $T\setminus \{1,\ldots,i\}$ by $i+1$.
		Let $E(G)=\{e_1,\ldots,e_m\}$. Say $e_i=\{r_i,s_i\}$ for $i=1,\dots,m$, and set $u_i={\bf x}_{[n]}/{\bf x}_{e_i}$. Then $\mathcal{G}(I)=\{u_1,\dots,u_m\}$.
		
		Let $T=S[y_1,\dots,y_m]$ be a polynomial ring and let $\varphi:T\rightarrow\mathcal{R}(I)$ be the $S$-algebra homomorphism defined by $\varphi(y_i)=u_it$ for $i=1,\dots,m$. Let $J=\Ker\,\varphi$. Consider the lex order  $<_\lex$ on $T$ induced by $x_1>\dots>x_n>y_1>\dots>y_m$. We prove that $\mathcal{R}(I)$ satisfies the $x$-condition with respect to this order. To this aim, consider a minimal monomial generator $uy_{i_1}\cdots y_{i_k}\in \ini_<(J)$, where $u\in S$ is a monomial. We show that $\deg(u)\leq 1$. By contradiction assume that $\deg(u)\geq 2$. Then there exists a binomial  $h=uy_{i_1}\cdots y_{i_k}-vy_{j_1}\cdots y_{j_k}\in J$ with $\ini_<(h)=uy_{i_1}\cdots y_{i_k}$, such that $v\in S$ is a monomial with $\gcd(u,v)=1$ and $u>_\lex v$. By \cite[Theorem 3.13]{HHBook2} we may assume that $h$ is a primitive binomial. Let $T'=S[z_1,\dots,z_m]$ be a polynomial ring, let  $\varphi':T'\rightarrow\mathcal{R}(I(G))$ be the $S$-algebra homomorphism defined by $\varphi'(z_i)={\bf x}_{e_i}t$ for all $i=1,\dots,m$, and let $J'=\Ker\,\varphi'$. 
		Then the relation $h\in J$ gives the relation $h'=uz_{j_1}\cdots z_{j_k}-vz_{i_1}\cdots z_{i_k}\in J'$.
		Note that one may view $\mathcal{R}(I(G))$ as the edge ring $$K[I(G^*)]=K[{\bf x}_e:\, e\in E(G^*)],$$ where $G^*$ is a graph obtained from $G$ by adding a new vertex $n+1$ to $G$ and connecting it to all  vertices of $G$. Since $h$ is a primitive binomial in $J$, it can be easily seen that $h'$ is a primitive binomial in $J'$. 
		So by \cite[Corollary 10.1.5]{HHBook}, the relation $h'$ corresponds to a primitive even closed walk in $G^*$, say $W$.   
		Since $\deg(u)\geq 2$, the vertex $n+1$ appears at least two times in $W$. Let $q_1$ be the smallest integer such that $x_{q_1}\mid u$. 
		Then we may write $$n+1,q_1,q_2,\ldots,q_d,n+1,q_{d+1},\ldots$$ as part of $W$, where $q_i$'s are vertices in $G$. Notice that $d$ is an even integer, otherwise $n+1,q_1,q_2,\ldots,q_d,n+1$ will be an even closed subwalk of $W$, which contradicts to $W$ being primitive. Moreover, we have $x_{q_1}\mid u$
		and $x_{q_d}\mid u$ with $q_1\leq q_d$ and $x_{q_{d+1}}\mid v$.
		We set $\{q_i,q_{i+1}\}=e_{s_i}$ for all $i$.  Hence,  from the relation $h'$ it follows that $$\{s_1,s_3,\ldots,s_{d-3},s_{d-1}\}\subset \{i_1,i_2,\ldots,i_k\}.$$ 
		We consider the following cases.
		
		\smallskip
		\textbf{Case 1}. Assume that $q_d<n$. Then by our labeling on $G$, the graph $G_{q_{d}-1}=G[\{q_d,q_d+1,\ldots,n\}]$ is connected and it has at least two vertices. Here, $G[A]$ denotes the induced subgraph of $G$ on the vertex set $A$. Thus there exists a vertex $\ell>q_d$ such that $e'=\{\ell,q_d\}\in E(G_{q_{d}-1})\subseteq E(G)$. Then corresponding to the even closed walk $n+1,q_1,q_2,\ldots,q_d,\ell,n+1$ we obtain the relation $$g=x_{q_1}z_{s_2}z_{s_4}\cdots z_{s_{d-2}}z_{b}-x_{\ell}z_{s_1}z_{s_3}\cdots z_{s_{d-1}}\in J',$$ where $b$ is an integer with $\varphi'(z_b)=\textbf{x}_{e'}t$.
		The relation $g$ implies that     $$f=x_{q_1}y_{s_1}y_{s_3}\cdots y_{s_{d-1}}-x_{\ell}y_{s_2}y_{s_4}\cdots y_{s_{d-2}}y_{b}\in J.$$ Since $\ell>q_d\geq q_1$, we have $\ini_<(f)=x_{q_1}y_{s_1}y_{s_3}\cdots y_{s_{d-1}}\in\ini_<(J)$ with  $x_{q_1}\mid u$ and $y_{s_1}y_{s_3}\cdots y_{s_{d-1}}\mid y_{i_1}\cdots y_{i_k}$. This contradicts to the minimality of $uy_{i_1}\cdots y_{i_k}$. 
		
		\smallskip
		\textbf{Case 2}. Assume that $q_d=n$. Then from $\gcd(u,v)=1$ and that $x_{q_{d+1}}$ divides $v$, we conclude that   $q_{d+1}\leq n-1$. Now, from the fact that $q_1$ is  the smallest integer such that $x_{q_1}$ divides $u$, that $u>_{\lex} v$ and $\gcd(u,v)=1$, we obtain that $q_1\leq n-2$. Then the graph $G_{q_1}=G[\{q_1+1,q_1+2,\ldots,n\}]$ is connected and has at least two vertices. So there exists a vertex $\ell>q_1$ such that $e'=\{\ell,q_d\}\in E(G_{q_1})\subseteq E(G)$. Similar to case 1, this leads to a relation of the form $$f=x_{q_1}y_{s_1}y_{s_3}\cdots y_{s_{d-1}}-x_{\ell}y_{s_2}z_{s_4}\cdots y_{s_{d-2}}y_{b}\in J$$ with $\ini_<(f)=x_{q_1}y_{s_1}y_{s_3}\cdots y_{s_{d-1}}$ and we get a contradiction to the minimality of $uy_{i_1}\cdots y_{i_k}$. So we conclude that $\deg(u)\leq 1$ and hence $\mathcal{R}(I)$ satisfies the $x$-condition with respect to the lex order  $<_\lex$ on $T$ induced by $x_1>\dots>x_n>y_1>\dots>y_m$.
		
		\smallskip
		(b) follows from (a) and Theorem \ref{GeneralStatementX}.
	\end{proof}
	
	Next, we establish Theorem \ref{ThmB}.
	
	\begin{Corollary}\label{Cord=n-2}
		Let $I\subset S$ be a squarefree monomial ideal generated in degree $n-2$. Write $I=I_c(G)$. The following statements are equivalent.
		\begin{enumerate}
			\item[\textup{(i)}] $I$ has a linear resolution.
			\item[\textup{(ii)}] $I^k$ has a linear resolution, for all $k\ge1$.
			\item[\textup{(iii)}] $I^k$ has linear quotients, for all $k\ge1$.
			\item[\textup{(iv)}] $c(G)=1$.
		\end{enumerate}
	\end{Corollary}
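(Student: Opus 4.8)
The plan is to close the cycle of implications (iv) $\Rightarrow$ (iii) $\Rightarrow$ (ii) $\Rightarrow$ (i) $\Rightarrow$ (iv), where the substantive inputs are Theorem \ref{LinearPowers} for the powers and an Alexander-duality argument for the converse, glued together by a reduction that removes isolated vertices. First I would reduce to the case in which $G$ has no isolated vertices. Let $W\subseteq[n]$ be the set of isolated vertices of $G$ and $V=[n]\setminus W$. Since every edge of $G$ lies in $V$, each generator $\mathbf{x}_{[n]\setminus e}$ is divisible by $\mathbf{x}_W=\prod_{i\in W}x_i$, and factoring this out gives $I=\mathbf{x}_W\cdot I_c(G[V])$, where $G[V]$ is the induced subgraph on $V$ and carries all the edges of $G$. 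Multiplication by the monomial $\mathbf{x}_W$, whose support is disjoint from $\supp(I_c(G[V]))$, is a graded isomorphism onto its image shifting degrees by $|W|$; hence it preserves the linear resolution property (the graded Betti numbers merely shift) and, more elementarily, the linear quotients property, since for any monomials $g_1,\dots,g_j$ one has $(\mathbf{x}_W^k g_1,\dots,\mathbf{x}_W^k g_{j-1}):\mathbf{x}_W^k g_j=(g_1,\dots,g_{j-1}):g_j$. As $I^k=\mathbf{x}_W^k\,I_c(G[V])^k$ for every $k$, all of the conditions (i)--(iv) hold for $I$ if and only if they hold for $I_c(G[V])$; and since $G[V]$ has no isolated vertices, $c(G)$ equals the number of connected components of $G[V]$. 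Thus I may assume $G=G[V]$ has no isolated vertices, so that $c(G)=1$ is equivalent to $G$ being connected.

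For (iv) $\Rightarrow$ (iii), with $G$ connected Theorem \ref{LinearPowers} applies verbatim: $\mathcal{R}(I)$ satisfies the $x$-condition with respect to an $x$-dominant lexicographic order, so by Theorem \ref{GeneralStatementX} every power $I^k$ has linear quotients. The implications (iii) $\Rightarrow$ (ii) and (ii) $\Rightarrow$ (i) are immediate: an ideal generated in a single degree with linear quotients has a linear resolution, and specializing (ii) to $k=1$ yields (i).

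The remaining implication (i) $\Rightarrow$ (iv) is where I would argue by contraposition using Alexander duality. Writing $I=I_\Delta$, the minimal nonfaces of $\Delta$ are exactly the sets $[n]\setminus e$ with $e\in E(G)$, so the facets of the Alexander dual $\Delta^\vee$ are precisely the edges $e\in E(G)$; that is, $\Delta^\vee$ is the graph $G$ viewed as a $1$-dimensional simplicial complex on $[n]$, and the no-isolated-vertices reduction guarantees that $\Delta^\vee$ uses every vertex and is pure of dimension $1$. By the Eagon-Reiner theorem, $I_\Delta$ has a linear resolution if and only if $\Delta^\vee$ is Cohen-Macaulay, and Reisner's criterion shows that a pure $1$-dimensional complex is Cohen-Macaulay over any field if and only if it is connected. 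Hence $I$ has a linear resolution if and only if $G$ is connected, i.e. $c(G)=1$; in particular this equivalence is field-independent, consistent with $\mathcal{I}_{n,n-2}(K)$ not depending on $K$. This equivalence (i) $\Leftrightarrow$ (iv) is the content established for complementary edge ideals in \cite{FM1}, and could be cited directly instead.

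The hardest genuinely new ingredient, Theorem \ref{LinearPowers}, is already in hand, so the corollary is essentially an assembly. The only points requiring care are the reduction step, where one must check that factoring out $\mathbf{x}_W$ transports the linear quotients property through all powers simultaneously (and that $c(G)$ is read off correctly from $G[V]$), and the identification of $\Delta^\vee$ with $G$ in the converse direction; I expect the former to be the main bookkeeping obstacle, since it is what allows the connected case of Theorem \ref{LinearPowers} to cover the genuinely disconnected-with-isolated-vertices range permitted by $c(G)=1$.
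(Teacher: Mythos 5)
Your proof is correct and takes essentially the same route as the paper's: the trivial chain (iii) $\Rightarrow$ (ii) $\Rightarrow$ (i), the Eagon--Reiner/Reisner argument identifying the Alexander dual with the graph $G$ for (i) $\Rightarrow$ (iv), and Theorem \ref{LinearPowers} together with factoring out the isolated vertices for (iv) $\Rightarrow$ (iii). The only cosmetic difference is that you perform the isolated-vertex reduction once at the outset for all four conditions, whereas the paper invokes it only inside the step (iv) $\Rightarrow$ (iii).
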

	
	\begin{proof}
		(iii) $\Rightarrow$ (ii) $\Rightarrow$ (i) is valid for any equigenerated monomial ideal.
		
		(i) $\Rightarrow$ (iv) Let $V(G)=[n]$. By the Eagon-Reiner Theorem \cite[Theorem 8.1.9]{HHBook} it follows that $S/I^\vee$ is Cohen-Macaulay, where $I^\vee=\bigcap_{e\in E(G)}P_{[n]\setminus e}$ and $P_{[n]\setminus e}=(x_i:\ i\in[n]\setminus e)$. We can write $I^\vee=I_\Delta$, where $\Delta$ is the simplicial complex on $[n]$ whose set of facets is $\mathcal{F}(\Delta)=E(G)$. Since $\Delta$ is $1$-dimensional, Reisner Theorem \cite[Theorem 8.1.6]{HHBook} implies that $S/I^\vee$ is Cohen-Macaulay, if and only if, the $0$th reduced simplicial homology group $\widetilde{H}_0(\Delta;K)$ is zero. By \cite[Problem 8.2]{HHBook}, this is the case, if and only if, $\Delta$ is connected. Hence $c(G)=1$ and statement (iv) follows.
		
		(iv) $\Rightarrow$ (iii) Since $c(G)=1$, we can write $I={\bf x}_AI_c(H)$, where $A\subset V(G)$ is the set of isolated vertices of $G$ and $H$ is the connected graph obtained from $G$ by removing the isolated vertices of $G$. Then $I^k$ has linear quotients for all $k\ge1$, if and only if, $I_c(H)^k$ has linear quotients for all $k\ge1$. Now, Theorem \ref{LinearPowers}(b) yields the assertion.
	\end{proof}
	
	We describe the order of linear quotients for $I^k$ in the following 
	
	\begin{Remark}
		Let $I\subset S$ be a squarefree monomial ideal generated in degree $n-2$ with a linear resolution. Let $G$ be the graph with $I=I_c(G)$ and $H$ be the connected graph obtained from $G$ by deleting the isolated vertices of $G$, see Corollary \ref{Cord=n-2}. It follows from the proof of Theorem \ref{LinearPowers} that for any labeling $V(H)=\{1,\ldots,s\}$ on $H$ such that $H\setminus \{1,\ldots,i\}$ is a connected graph for all $i$, the Rees algebra $\mathcal{R}(I_c(H))$ satisfies the $x$-condition with respect to the lexicographical order $<_{\lex}$ induced by $x_1>\dots>x_s>y_1>\dots>y_m$. Then by the proof of Theorem \ref{GeneralStatementX}, it follows that 
		for any integer $k\geq 1$, the ideal $I_c(H)^k$ has linear quotients with respect to the lexicographical order 
		induced by $x_1>\dots>x_s$.
		Hence $I={\bf x}_AI_c(H)^k$ has linear quotients with respect to the order $${\bf x}_Af_1,\ldots, {\bf x}_Af_r,$$ where ${\bf x}_A=x_{s+1}x_{s+2}\cdots x_n$, and $f_1>_{\lex}f_2>_{\lex}\cdots >_{\lex} f_r$ are the minimal monomial generators of $I_c(H)^k$.  
	\end{Remark}
	
	
	
	
	
	
	\section{Proofs of Theorems \ref{ThmC} and \ref{ThmD}}
	
	
	In order to prove Theorem \ref{ThmC}, we use the notion of Betti splitting.  Let $I,I_1,I_2\subset S$ be monomial ideals such that $\mathcal{G}(I)=\mathcal{G}(I_1)\sqcup\mathcal{G}(I_2)$ is the disjoint union of $\mathcal{G}(I_1)$ and $\mathcal{G}(I_2)$. We say that $I=I_1+I_2$ is a \textit{Betti splitting} (\cite{FHT}), if
	\begin{equation}\label{eq:BettiSplitEq}
		\beta_{i,j}(I)=\beta_{i,j}(I_1)+\beta_{i,j}(I_2)+\beta_{i-1,j}(I_1\cap I_2),\ \ \ \text{for all}\ i,j\ge0.
	\end{equation}
	In particular, when $I=I_1+I_2$ is a Betti splitting,  $$\reg\,I=\max\{\reg\,I_1,\,\reg\,I_2,\,\reg(I_1\cap I_2)-1\}.$$\smallskip
	By \cite[Proposition 2.1]{FHT}, $I=I_1+I_2$ is a Betti splitting, if and only if, the inclusion maps $I_1\cap I_2\rightarrow I_1$ and $I_1\cap I_2\rightarrow I_2$ are \textit{$\Tor$-vanishing}.
	
	We quote \cite[Proposition 3.1]{EK} (see also \cite[Lemma 4.2]{NV}).
	
	\begin{Lemma}\label{lemma:criteria-bs}
		Let $J,L\subset S$ be non-zero monomial ideals with $J\subset L$. Suppose there exists a map $\varphi:\mathcal{G}(J)\rightarrow\mathcal{G}(L)$ such that for any $\emptyset\ne\Omega\subseteq \mathcal{G}(J)$ we have
		$$
		\lcm(u:u\in\Omega)\in \mathfrak{m}\cdot(\lcm(\varphi(u):u\in\Omega)),
		$$
		where $\mathfrak{m}=(x_1,\dots,x_n)$. Then the inclusion map $J\rightarrow L$ is $\Tor$-vanishing.
	\end{Lemma}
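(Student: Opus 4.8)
The plan is to compute $\Tor$ at the chain level using Taylor complexes, which are free (though in general non-minimal) resolutions, and to exhibit a comparison map lifting the inclusion $\iota\colon J\hookrightarrow L$ all of whose structure constants lie in $\m$. Write $\mathcal{G}(J)=\{u_1,\dots,u_r\}$ and $v_i=\varphi(u_i)\in\mathcal{G}(L)$, and let $T_\bullet$ and $T'_\bullet$ be the Taylor complexes resolving $J$ and $L$, with $S$-bases $\{e_\Omega\}$ and $\{e'_\Lambda\}$ indexed by the non-empty subsets $\Omega\subseteq\mathcal{G}(J)$ and $\Lambda\subseteq\mathcal{G}(L)$; here $e_\Omega$ sits in homological degree $|\Omega|-1$ and carries multidegree $\mathbf{m}_\Omega=\lcm(u:u\in\Omega)$. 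Since both are free resolutions, $\Tor_i^S(K,J)=H_i(T_\bullet\otimes_S K)$ and $\Tor_i^S(K,L)=H_i(T'_\bullet\otimes_S K)$, and any chain map $\Phi\colon T_\bullet\to T'_\bullet$ lifting $\iota$ induces $\Tor_i(\iota)=H_i(\Phi\otimes_S K)$.

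I would then define $\Phi$ on basis elements by
$$
\Phi(e_\Omega)\ =\ \begin{cases} \pm\dfrac{\mathbf{m}_\Omega}{\mathbf{m}'_{\varphi(\Omega)}}\,e'_{\varphi(\Omega)} & \text{if } \varphi|_\Omega \text{ is injective},\\[2mm] 0 & \text{otherwise,}\end{cases}
$$
where $\varphi(\Omega)=\{\varphi(u):u\in\Omega\}$ and $\mathbf{m}'_{\varphi(\Omega)}=\lcm(\varphi(u):u\in\Omega)$. The hypothesis applied to $\Omega$ guarantees $\mathbf{m}'_{\varphi(\Omega)}\mid\mathbf{m}_\Omega$, so each quotient $\mathbf{m}_\Omega/\mathbf{m}'_{\varphi(\Omega)}$ is a genuine monomial and $\Phi$ is multidegree-preserving; applied to singletons it gives $\Phi(e_{\{u_i\}})=(u_i/v_i)\,e'_{\{v_i\}}$ with $u_i/v_i\in\m$, and a direct check against the augmentations $e_{\{u_i\}}\mapsto u_i$, $e'_{\{v_i\}}\mapsto v_i$ shows that $\Phi$ lifts $\iota$.

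The decisive observation is that the hypothesis is imposed for \emph{every} $\Omega$, not merely for singletons: on each $\Omega$ where $\varphi$ is injective the structure constant $\mathbf{m}_\Omega/\mathbf{m}'_{\varphi(\Omega)}=\lcm(\Omega)/\lcm(\varphi(\Omega))$ lies in $\m$ by assumption, while on the remaining $\Omega$ the map $\Phi$ vanishes. Hence $\Phi\otimes_S K$ is identically zero, and therefore $\Tor_i(\iota)=H_i(\Phi\otimes_S K)=0$ for all $i$, which is exactly the assertion that $\iota\colon J\to L$ is $\Tor$-vanishing.

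The step I expect to require the most care—and the only genuine obstacle—is verifying that $\Phi$ is a chain map, i.e. $\partial'\Phi=\Phi\partial$. On subsets where $\varphi$ is injective this reduces, after cancelling telescoping ratios of lcm's, to matching the Taylor differentials term by term, and the signs can be fixed by the usual permutation-sign convention once total orders on $\mathcal{G}(J)$ and $\mathcal{G}(L)$ are chosen. The only delicate case is an $\Omega$ carrying a single coincidence $\varphi(u)=\varphi(u')$: then $\Phi(e_\Omega)=0$, whereas among the faces exactly $e_{\Omega\setminus\{u\}}$ and $e_{\Omega\setminus\{u'\}}$ survive under $\Phi\partial$ and share the same target $e'_{\varphi(\Omega)}$, so the chain-map identity becomes the cancellation of two equal monomials with opposite induced signs—precisely the identity underlying the functoriality of the Taylor complex with respect to maps of generating sets. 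Once this functoriality is in place, the conclusion is immediate from the previous paragraph.
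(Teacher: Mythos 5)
Your proof is correct, and since the paper gives no proof of this lemma at all (it is quoted from \cite[Proposition 3.1]{EK}, see also \cite[Lemma 4.2]{NV}), the relevant comparison is with those sources: your Taylor-complex comparison map lifting $J\hookrightarrow L$, with every nonzero structure constant $\lcm(u:u\in\Omega)/\lcm(\varphi(u):u\in\Omega)$ lying in $\m$ by hypothesis so that the lift becomes zero after tensoring with $K$, is essentially the standard argument behind those results. The chain-map verification you single out as the delicate step does go through with the usual exterior-algebra sign conventions—in particular, in the single-coincidence case $\varphi(u)=\varphi(u')$ the two surviving face terms both carry the coefficient $\lcm(w:w\in\Omega)/\lcm(\varphi(w):w\in\Omega)$ and opposite signs, since the corresponding sorted sequences differ by a cyclic shift of odd length compensating the difference of the face signs—so your argument is complete.
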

	
	We are ready to prove Theorems \ref{ThmC} and \ref{ThmD}.
	
	\begin{proof}[Proof of Theorem \ref{ThmC}]
		For our convenience, we rename the variables from $x_1$ up to $x_6$ with $a,b,c,d,e,f$ and the variables $x_7,\dots,x_n$ with $y_1,\dots,y_{n-6}$. Let $J$ be the ideal in equation (\ref{eq:Te}). Terai has shown that
		\begin{equation}\label{eq:Terai}
			\reg\,J\ =\ \begin{cases}
				\,3&\textup{if}\ \textup{char}(K)\ne2,\\
				\,4&\textup{otherwise}.
			\end{cases}
		\end{equation}
		
		Let $L$ be the ideal generated by all squarefree monomials of degree 2 in the variables $\{a,b,c,d,e,f\}$. Notice that $J$ is contained in $L$, and $L$ has linear quotients and linear resolution, because it is polymatroidal. Fix $d$ with $3\le d\le n-3$, and set
		$$
		I\ =\ (\prod_{i=1}^{d-3}y_i)[J+(y_{d-2},y_{d-1},y_d,\dots,y_{n-6})L].
		$$
		
		We claim that $I$ is the desired ideal. Let $H=J+(y_{d-2},y_{d-1},y_d,\dots,y_{n-6})L$. Notice that $I$ is a squarefree, fully-supported monomial ideal generated in degree $d$ and that $\reg\,I=\reg\,H+(d-3)$. Hence, it remains to show that $\reg\,H=3$ if $\textup{char}(K)\ne 2$ and $\reg\,H=4$, otherwise. To this end, we claim that 
		\begin{equation}\label{eq:bs}
			H\ =\ J+(y_{d-2},\dots,y_{n-6})L
		\end{equation}
		is a Betti splitting. Since $J\subset L$ and the variables $y_i$ do not divide any minimal monomial generator of $J$ and $L$, we obtain that
		$$
		W\ =\ J\cap(y_{d-2},\dots,y_{n-6})L\ =\ (y_{d-2},\dots,y_{n-6})J.
		$$
		
		It order to conclude that (\ref{eq:bs}) is a Betti splitting, we must prove that the inclusion maps $W\rightarrow J$ and $W\rightarrow(y_{d-2},\dots,y_{n-6})L$ are $\Tor$-vanishing. Using Lemma \ref{lemma:criteria-bs}, it is clear that the first map is $\Tor$-vanishing. To show that the second map is $\Tor$-vanishing we prove that the map $\varphi:\mathcal{G}(W)\rightarrow\mathcal{G}(J)$, defined as follows, satisfies Lemma \ref{lemma:criteria-bs}. Each monomial $u\in\mathcal{G}(W)$ is of the form $u=y_iv$ for some $d-2\le i\le n-6$ and $v\in\mathcal{G}(J)$. Since $J\subset L$, for each $v\in\mathcal{G}(J)$ we pick a monomial $v'\in\mathcal{G}(L)$ that divides $v$. Then we set $\varphi(u)=\varphi(y_iv)=v'$. For each non-empty subset $\Omega$ of $\mathcal{G}(W)$ it is then clear that 
		$$
		\lcm(u:u\in\Omega)\ \in\ (y_{d-2},\dots,y_{n-6})\lcm(\varphi(u):u\in\Omega)\subset\m(\lcm(\varphi(u):u\in\Omega)),
		$$
		because $\lcm(u:u\in\Omega)$ is divided by $y_i$ for some $i$ with $d-2\le i\le n-6$, while $\lcm(\varphi(u):u\in\Omega)$ is a monomial in the variables $a,b,c,d,e,f$. Hence, (\ref{eq:bs}) is indeed a Betti splitting.
		
		Since the supports of $(y_{d-2},\dots,y_{n-6})$ and $L$ are disjoint, and both ideals have linear resolutions, it follows that $\reg\,(y_{d-2},\dots,y_{n-6})L=3$. Similarly, for the same reason, we have that $\reg\,W=\reg\,J+1$. Consequently,
		$$
		\reg\,H\ =\ \max\{\reg\,J,\,\reg\,(y_{d-2},\dots,y_{n-6})L,\,\reg\,W-1\}\ =\ \max\{\reg\,J,\,3\}.
		$$
		Now, formula (\ref{eq:Terai}) yields the assertion.
	\end{proof}
	
	\begin{proof}[Proof of Theorem \ref{ThmD}]
		As before, we rename the variables from $x_1$ up to $x_6$ with $a,b,c,d,e,f$ and the variables $x_7,\dots,x_n$ with $y_1,\dots,y_{n-6}$. Let $J$ be the ideal in equation (\ref{eq:St}). Sturmfels \cite{S} has shown that $J$ has linear quotients with respect to its minimal generators in the given order, but $J^2$ does not have a linear resolution.
		
		Now, let $L$ be the ideal generated by all squarefree monomials of degree 2 in the variables $\{a,b,c,d,e,f\}$. Notice that $L$ has linear quotients, hence a linear resolution, and $J$ is contained in $L$. Fix $d$ with $3\le d\le n-3$, and set
		$$
		I\ =\ (\prod_{i=1}^{d-3}y_i)[J+(y_{d-2},y_{d-1},y_d,\dots,y_{n-6})L].
		$$
		
		We claim that $I$ is the desired ideal. Let $H=J+(y_{d-2},y_{d-1},y_d,\dots,y_{n-6})L$. Notice that $I$ is a squarefree, fully-supported monomial ideal generated in degree $d$ and $I$ has linear quotients if and only if $H$ has linear quotients. By \cite[Lemma 4.7(b)]{FM}, $(y_{d-2},\dots,y_{n-6})L$ has linear quotients, say with linear quotients order $u_1,\dots,u_r$. Let $v_1=def$, $v_2=cef$, $v_3=cdf$, $v_4=cde$, $v_5=bef$, $v_6=bcd$, $v_7=acf$, $v_8=ade$. As remarked before, $v_1,\dots,v_8$ is a linear quotients order of $J$. We claim that
		$$
		u_1,\dots,u_r,v_1,\dots,v_8
		$$
		is a linear quotients order of $H$. For all $i=2,\dots,r$, the ideal $(u_1,\dots,u_{i-1}):u_i$ is generated by variables. Let $1\le i\le 8$, we claim that $(u_1,\dots,u_r,v_1,\dots,v_{i-1}):v_i$ is also generated by variables. Since $(v_1,\dots,v_{i-1}):v_i$ is generated by variables, it is enough to show that $(u_1,\dots,u_r):v_i$ is also generated by variables. Since $v_i\in J\subset L$ and the variables $y_j$ do not divide $v_i$ nor any minimal generator of $L$, it follows that
		$$
		(u_1,\dots,u_r):v_i=(y_{d-2},\dots,y_{n-6})(L:v_i)=(y_{d-2},\dots,y_{n-6}),
		$$
		as desired. This shows that $H$, and therefore $I$, has indeed linear quotients.
		
		Finally, we claim that $I^2$ does not have a linear resolution. Indeed, the restriction (see \cite[Lemma 4.4]{HHZ2}) of the ideal
		$$
		I^2\ =\ (\prod_{i=1}^{d-3}y_i)^2J^2+(\prod_{i=1}^{d-3}y_i)(y_{d-2},\dots,y_{n-6})JL+(y_{d-2},\dots,y_{n-6})^2L^2
		$$
		with respect to the set of variables $\{a,b,c,d,e,f,y_1,\dots,y_{d-3}\}$ is the ideal $(\prod_{i=1}^{d-3}y_i)J^2$.
		
		Since $J^2$ does not have a linear resolution, $(\prod_{i=1}^{d-3}y_i)J^2$ does not have a linear resolution. By \cite[Lemma 4.4]{HHZ2}, $I^2$ can not have a linear resolution, as desired.
	\end{proof}
	
	As a consequence of Theorem \ref{ThmE} we obtain
	
	\begin{Corollary}
		Let $I\subset S$ be a squarefree monomial ideal with $|\supp(I)|\leq 5$. Then 
		\begin{enumerate}
			\item[\textup{(a)}] The linear resolution property for $I$ does not depend on the field $K$.
			\item[\textup{(b)}] The linear powers property for  $I$ does not depend on the field $K$.
		\end{enumerate}
		Moreover, $I$ has a linear resolution if and only if $I^k$ has linear quotients for all $k$. 
		
		In particular, a squarefree monomial ideal $I$ for which $I$ has a linear resolution and some power of $I$ does not have a linear resolution should satisfy $|\supp(I)|\geq 6$.
	\end{Corollary}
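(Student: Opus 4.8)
The plan is to reduce to the polynomial ring spanned by the variables actually occurring in $I$, and then to observe that Theorem~\ref{ThmE} applies to \emph{every} relevant degree once the support has size at most $5$. First I would set $S'=K[x_i:i\in\supp(I)]$ and regard $I$ as a fully-supported ideal of $S'$, writing $n'=|\supp(I)|\le 5$. Since $S=S'\otimes_K K[x_j:j\notin\supp(I)]$ is a faithfully flat extension, the minimal graded free resolution of each power $I^k$ over $S$ is obtained from the one over $S'$ by base change; hence $\beta_{i,j}^{S}(I^k)=\beta_{i,j}^{S'}(I^k)$ and $\reg_S I^k=\reg_{S'}I^k$ for all $i,j,k$, and the linear-quotients property of $I^k$, being intrinsic to the generators and their colon ideals, is likewise unaffected. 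Thus each of the three properties under discussion holds over $S$ if and only if it holds over $S'$, so I may assume $I$ is fully-supported with $n'\le 5$. Because a linear resolution forces $I$ to be equigenerated, I may also assume $I$ is generated in a single degree $d$ with $0\le d\le n'$; otherwise $I$ has no linear resolution over any field and the relevant claims are immediate.

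The crucial step will be a purely numerical observation: the degrees \emph{excluded} from the good set $\{0,1,2,n'-2,n'-1,n'\}$ of Theorem~\ref{ThmE} are exactly those with $3\le d\le n'-3$, and this range is empty whenever $n'\le 5$. Consequently every admissible degree $0\le d\le n'$ lies in $\{0,1,2,n'-2,n'-1,n'\}$, so Theorem~\ref{ThmE} applies to $I$ regardless of the value of $d$. This is also precisely where passing to $S'$ is indispensable: in the original ring $S$ the degree $d$ need not belong to $\{0,1,2,n-2,n-1,n\}$, and so Theorem~\ref{ThmE} could not be invoked directly there.

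With the degree known to lie in the good set, I would read off the conclusions from Theorem~\ref{ThmE}. Its ``furthermore'' part gives $\text{(i)}\Leftrightarrow\text{(ii)}\Leftrightarrow\text{(iii)}$, that is, $I$ has a linear resolution if and only if $I$ has linear quotients if and only if $I^k$ has linear quotients for all $k$; the last condition is manifestly independent of $K$, which yields both the ``moreover'' statement and part~(a). For part~(b), the equivalence (a)$\Leftrightarrow$(c) of Theorem~\ref{ThmE} shows $\mathcal{I}_{n',d}(K)=\mathcal{I}_{n',d}^*(K)$, i.e.\ $I$ has a linear resolution if and only if $I$ has linear powers; combined with the field-independence just established, the linear-powers property is field-independent as well. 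The final assertion is then the contrapositive of the ``moreover'': over a support of size at most $5$, a linear resolution forces every power to have linear quotients and hence a linear resolution, so any ideal witnessing the pathological behavior must satisfy $|\supp(I)|\ge 6$. The whole argument is essentially a repackaging of Theorem~\ref{ThmE}; the main obstacle will be the support reduction of the first paragraph, namely verifying that restricting to $\supp(I)$ preserves all the homological and combinatorial data, after which the numerical observation of the second paragraph makes $|\supp(I)|\le 5$ the exact threshold.
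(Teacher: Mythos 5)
Your proposal is correct and takes essentially the approach the paper intends: the corollary is stated there without proof as an immediate consequence of Theorem~\ref{ThmE}, and your argument---restricting to $S'=K[x_i:i\in\supp(I)]$ via faithfully flat base change, observing that for $n'=|\supp(I)|\le 5$ the excluded range $3\le d\le n'-3$ is empty so every admissible degree lies in $\{0,1,2,n'-2,n'-1,n'\}$, and then reading off (a), (b) and the ``moreover'' from Theorem~\ref{ThmE}---is exactly that implicit reduction, made explicit. The only minor slip is labeling the step for part~(b) as the equivalence (a)$\Leftrightarrow$(c) of Theorem~\ref{ThmE}, when the cleanest citation is the implication (b)$\Rightarrow$(c); this does not affect the validity of the argument.
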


	\noindent\textbf{Acknowledgment.}
	A. Ficarra was partly supported by INDAM (Istituto Nazionale di Alta Matematica), and also by the Grant JDC2023-051705-I funded by MICIU/AEI/10.13039/501100011033 and by the FSE+.

\end{document}